\theoremstyle{comment}
\newtheorem*{mcomment}{\color{cyan}{Comment}}
\newcommand*{\nfrac}[2]{\genfrac{}{}{0pt}{}{#1}{#2}}
\newtheorem{theorem}{Theorem}[section]
\newtheorem{lemma}[theorem]{Lemma}
\newtheorem{corollary}[theorem]{Corollary}
\newtheorem{proposition}[theorem]{Proposition}
\theoremstyle{definition}
\newtheorem{definition}[theorem]{Definition}
\title{Infinite Families of Hypertopes from Centrally Symmetric Polytopes}
\author[Claudio Alexandre Piedade]{Claudio Alexandre Piedade}
\address{
Center for Research and Development in Mathematics and Applications, Department of Mathematics, University of Aveiro, Portugal
}
\email{claudio.a.piedade@ua.pt}
\begin{document}

\begin{abstract}
We construct infinite families of abstract regular polytopes of type $\{4,p_1,\ldots,p_{n-1}\}$ from extensions of centrally symmetric spherical abstract regular $n$-polytopes. In addition, by applying the halving operation, we obtain infinite families of both locally spherical and locally toroidal regular hypertopes of type $\left\{\nfrac{p_1}{p_1},\ldots,p_{n-1}\right\}$.
\end{abstract}

\maketitle

\noindent \textbf{Keywords:} Abstract Polytopes, Hypertopes, Extensions, Thin Geometries

\noindent \textbf{2010 Math Subj. Class:} 51E24, 52B11, 20F05

\section{Introduction}
Polytope theory is a well studied area of algebra and geometry. The concept of an abstract polytope was introduced in \cite{ARP} as a poset whose elements are faces.
Another way of defining an abstract polytope is as an incidence geometry which is thin, residually-connected, flag-transitive and has a linear Coxeter diagram. 
This idea of seeing polytopes as incidence geometries was recently used in \cite{HSH}, in which the authors decided to generalize this concept, dropping the linear diagram condition and naming these new structures hypertopes. 

Since the introduction of the concept of a hypertope, many examples have been given \cite{catalanohypertopes,exitchamb}, especially locally toroidal families \cite{rank4toroidal,hexagonal,FLPW2019}.
In \cite{spherical}, the authors classified the locally spherical regular hypertopes of spherical, euclidean or hyperbolic type, giving new examples of the hyperbolic ones using MAGMA. 
During the 9th Slovenian International Conference on Graph Theory in Bled (2019), Asia Ivi\'{c} Weiss posed me the problem of extending these examples into infinite families of locally spherical regular hypertopes of hyperbolic type. Since then, in \cite{teroasia2020}, Weiss and Montero have described two proper locally spherical hypertopes of hyperbolic type for ranks 4 and 5. More recently \cite{teroasia2021}, the same authors have given a constructive way to obtain families of locally spherical hypertopes of hyperbolic type using Schreier coset graphs.

The halving operation is quite common in regular polytopes of type $\{4,p\}$ ($p\geq 3$), where we double the fundamental region of the polytope, resulting in a self-dual polytope of type $\{p,p\}$. This operation was broadened to any non-degenerate abstract regular $n$-polytope by Weiss and Montero in \cite{teroasia2020}, in which its use on Danzer's polytopes $2^{\mathcal{P}}$ \cite{danzer_regular_1984} results in the two examples of locally spherical hypertopes of hyperbolic type mentioned above.

Here, we will obtain new families of regular hypertopes, some of which are locally spherical hypertopes of hyperbolic and euclidean type, while others are locally toroidal. To do so, we will extend centrally symmetric polytopes into a family of polytopes denoted as $2^{\mathcal{P},\mathcal{G}(s)}$ with type $\{4,p_1,\ldots,p_{n-1}\}$, where we can obtain regular hypertopes through the halving operation. 
The extension of polytopes is not new in polytope theory \cite{pellicer_extensions_2009} and the construction of the $2^{\mathcal{P},\mathcal{G}(s)}$ polytopes is well described in \cite[Section 8]{ARP}. Our families of extended polytopes $2^{\mathcal{P},\mathcal{G}(s)}$, when we consider $s=2$, give duals of the examples of locally spherical hypertopes of hyperbolic type given by Weiss and Montero in \cite{teroasia2020}. Moreover, the automorphism group presentation and its order are given for each of the polytopes $2^{\mathcal{P},\mathcal{G}(s)}$ and respective hypertopes, allowing this work to be extended further, as discussed in Section~\ref{futurework}.

In Section~\ref{back}, an introduction to abstract regular polytopes, regular hypertopes and the halving operation is presented. In Section~\ref{twisting}, we introduce the polytopes $2^{\mathcal{P},\mathcal{G}(s)}$. In Sections~\ref{rank2&n} and \ref{rank3&4} the families of polytopes and hypertopes are given, as well as their automorphism group presentation and order. Specifically, in Sections~\ref{ss:35} and \ref{ss:335} we give two families of locally spherical regular hypertopes of hyperbolic type of rank 4 and 5, respectively. Moreover, in Section~\ref{northoplex-ext} a family of arbitrary rank of locally spherical hypertopes of euclidean type is given and in Sections~\ref{ncubehyper} and \ref{ss:343} two distinct families of hypertopes of rank 4 and 5 are given from the halving of quotients of locally toroidal polytopes $\{4,4,3\}$ and $\{4,3,4,3\}$, respectively.

\section{Background}\label{back}

\subsection{C-groups}

A \emph{C-group of rank $n$} is a pair $(G,S)$, where $G$ is a group and $S:=\{\rho_0,\ldots,\rho_{n-1}\}$ is a generating set of involutions of $G$ that satisfy the \emph{intersection property}, i.e. $$\forall I,J\subseteq \{0,\ldots,n-1\},\ \langle \rho_i\ | \ i\in I\rangle\ \cap\ \langle \rho_j\ | \ j\in J\rangle\ = \ \langle \rho_k\ |\ k \in I\cap J\rangle.$$
A C-group is said to be a \emph{string C-group} if its generating involutions can be ordered in such a way that, for all $i,j$ where $|i-j|\geq 2$, $(\rho_i\rho_j)^2 = id$.
A subgroup of $G$ generated by all but one involution of $S$ is called a \emph{maximal parabolic subgroup} and is denoted as $$G_i := \langle \rho_j\ |\ j\in I\backslash\{i\}\rangle,$$ with $I:=\{0,\ldots, n-1\}$.

The \emph{Coxeter diagram} of a C-group $(G,S)$ is the graph whose nodes represent the elements of $S$ and an edge between the generators $i$ and $j$ has label $p_{ij}:=o(\rho_i\rho_j)$, the order of $\rho_i\rho_j$. By convention, edges with label equal to 2 are improper and are not drawn, and whenever an edge has label 3, its label is omitted. For string C-groups, this diagram is linear, like the one represented in Figure~\ref{coxeterdiagram}.
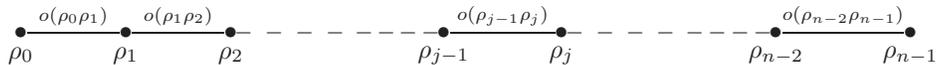
\begin{figure}[h]
 $$\xymatrix@-1.9pc{
*{\bullet} \ar@{-}[rrrrr]^{o(\rho_0\rho_1)} && & &&*{\bullet} \ar@{-}[rrrrr]^{o(\rho_1\rho_2)} && & &&*{\bullet} \ar@{--}[rrrrrr] && && && \ar@{--}[rrrr] && && *{\bullet} \ar@{-}[rrrrr]^{o(\rho_{j-1}\rho_j)} && & &&  *{\bullet} \ar@{--}[rrrrrr] && && && \ar@{--}[rrrr] && && *{\bullet} \ar@{-}[rrrrr]^{o(\rho_{n-2}\rho_{n-1})} && & && *{\bullet} \\
*{\rho_0} && & &&*{\rho_1} && & && *{\rho_2} && && && && && *{\rho_{j-1}} && & && *{\rho_{j}} && && && && && *{\rho_{n-2}} && & && *{\rho_{n-1}}\\
}$$
 \caption{Coxeter Diagram of a string C-group.}
 \label{coxeterdiagram}
\end{figure}
We say a C-group is a \emph{Coxeter group} when its group relations are just the ones given by its Coxeter diagram.
As we shall see, the automorphism groups of both abstract regular polytopes and regular hypertopes are C-groups.

\subsection{Regular Hypertopes and Regular Polytopes}\label{hyp}

The term hypertope was introduced in \cite{HSH} as a generalization of polytopes whose automorphism group is a C-group, but not necessarily a string C-group. A \emph{regular hypertope} is an incidence geometry that is residually-connected, thin and flag-transitive, as defined in \cite{HSH}. Here, we will focus on its construction from C-groups.

An \emph{incidence system} is a 4-tuple $\Gamma := (X,\ast, t, I)$ satisfying the following conditions:
\begin{itemize}
 \item $X$ is a set with the \emph{elements} of $\Gamma$;
 \item $I$ is a set with the \emph{types} of $\Gamma$;
 \item $t : X\rightarrow I$ is a \emph{type function}, attributing to each element $x\in X$ a type $t(x)\in I$; we call $x$ an \emph{$i$-element} if $t(x)=i$, for $i\in I$; and
 \item $\ast$ is a binary relation in $X$ called \emph{incidence}, which is reflexive, symmetric and such that, for all $x,y\in X$, if $x\ast y$ and $t(x)=t(y)$, then $x=y$.
\end{itemize}
The cardinality of $I$ is called the \emph{rank} of $\Gamma$.
A \emph{flag} is a set of pairwise incident element of $\Gamma$. For a flag $F$, the set $t(F):=\{t(x)|x\in F\}$ is called the \emph{type of F}, and we say $F$ is a \emph{chamber} when $t(F)=I$. An incidence system $\Gamma$ is called a \emph{geometry} or \emph{incidence geometry} if every flag of $\Gamma$ is a subset of a chamber. Consider the following proposition.

\begin{proposition}[Tits Algorithm, \cite{Tits1961}]\label{prop1}
 Let $n$ be a positive integer and $I:=\{0,\ldots,n-1\}$. Let $G$ be a group together with a family of subgroups $(G_i)_{i\in I}$, $X$
 the set consisting of all cosets $G_ig$ with $g\in G$ and $i\in I$, and $t:X\rightarrow I$ defined
 by $t(G_ig)=i$. Define an incidence relation $\ast$ on $X\times X$ by: $$G_ig_1\ast G_jg_2 \textnormal{ if and only if } G_ig_1\cap G_jg_2 \neq \emptyset .$$
 Then the 4-tuple $\Gamma:=(X,\ast, t,I)$ is an incidence system having a chamber.
 Moreover, the group $G$ acts by right multiplication as an automorphism group on $\Gamma$.
 Finally, the group $G$ is transitive on the flags of rank less than 3.
\end{proposition}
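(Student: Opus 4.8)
The plan is to verify the four assertions in turn, relying only on elementary coset combinatorics. First I would confirm that $\Gamma$ is a genuine incidence system. Reading the elements of $X$ as typed cosets (so that $t(G_ig)=i$ is well defined even when two of the subgroups happen to coincide as sets), reflexivity of $\ast$ is immediate from $G_ig\cap G_ig=G_ig\neq\emptyset$, and symmetry follows because set intersection is symmetric. For the remaining axiom, suppose $G_ig_1 \ast G_ig_2$ with equal types; two right cosets of the \emph{same} subgroup $G_i$ are either disjoint or equal, and since they meet they must coincide, giving $G_ig_1=G_ig_2$. To produce a chamber I would take $\{G_0,G_1,\ldots,G_{n-1}\}$, all with representative the identity: every pair of these intersects in a set containing $e$, so the elements are pairwise incident, and their types exhaust $I$.

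Next I would show that right multiplication yields automorphisms. For $h\in G$ set $\phi_h(G_ig):=G_igh$. This is well defined and type-preserving, and it respects incidence because right translation is a bijection of $G$ satisfying $(A\cap B)h=Ah\cap Bh$; hence $G_ig_1\cap G_jg_2\neq\emptyset$ if and only if $G_ig_1h\cap G_jg_2h\neq\emptyset$. Thus each $\phi_h$ is an automorphism of $\Gamma$, and the assignment $h\mapsto\phi_h$ realizes $G$ as a group of automorphisms acting by right multiplication.

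The heart of the statement is the transitivity on flags of rank less than $3$, and here I would use what may be called the common-representative trick. For rank $1$, given two type-$i$ elements $G_ig_1$ and $G_ig_2$, the element $h=g_1^{-1}g_2$ satisfies $G_ig_1h=G_ig_2$. For rank $2$, a flag $\{G_ia_1,G_ja_2\}$ of type $\{i,j\}$ is incident precisely when the two cosets share a point $p$; then $G_ia_1=G_ip$ and $G_ja_2=G_jp$, so the whole flag is based at the single element $p$. Given a second such flag based at $q$, the element $h=p^{-1}q$ sends $G_ip\mapsto G_iq$ and $G_jp\mapsto G_jq$ simultaneously, carrying the first flag onto the second.

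Finally, I expect the genuinely informative point—and the reason the claim is stated only for rank $<3$—to be the failure of this trick one step higher. A pair of incident cosets always admits a common representative, but three pairwise incident cosets need not share a common point, since pairwise nonempty intersection does not force a common element. Without a single $p$ through which all three cosets pass there is no canonical element to translate by, so the rank-$2$ argument does not extend, and transitivity on chambers becomes exactly the extra flag-transitivity condition one must impose separately to obtain a regular hypertope.
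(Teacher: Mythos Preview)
The paper does not supply its own proof of this proposition; it is quoted as a classical result attributed to Tits and then used as a black box throughout. Your argument is correct and follows the standard route: the incidence-system axioms are immediate from basic coset facts, right multiplication preserves incidence because $(A\cap B)h=Ah\cap Bh$, and the ``common representative'' trick is exactly how one obtains transitivity on flags of rank at most $2$. Your closing paragraph, noting that three pairwise intersecting cosets need not share a common element, correctly pinpoints why the argument does not extend to rank $3$; this is a helpful gloss but not part of the assertion itself.
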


The incidence system constructed using the proposition above will be denoted by $\Gamma(G; (G_i)_{i\in I})$ and designated as a \emph{coset geometry} if the incidence system is a geometry. The subgroups $(G_i)_{i\in I}$ are the maximal parabolic subgroups of $G$, stabilizers of an element of type $i$. 
Let $G$ be a C-group. We say $G$ is \emph{flag-transitive} on $\Gamma(G; (G_i)_{i\in I})$ if $G$ is transitive on all flags of a given type $J$, for each type $J\subseteq I$. We have the following result, which defines a regular hypertope from a C-group.

\begin{theorem}\cite[Theorem 4.6]{HSH}\label{theorem46}
Let $G=\langle \rho_i\,|\,i\in I\rangle$ be a C-group and let $\Gamma := \Gamma(G;(G_i)_{i\in I})$ where $G_i:=\langle \rho_j\,|\,j\neq i\rangle$ for all $i\in I$.
If $G$ is flag-transitive on $\Gamma$, then $\Gamma$ is a regular hypertope.
\end{theorem}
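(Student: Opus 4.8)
The plan is to verify directly the three defining properties of a regular hypertope that do not come for free — that $\Gamma$ is an incidence geometry, that it is thin, and that it is residually connected — since flag-transitivity is assumed as a hypothesis. Throughout I would work with the \emph{base chamber} $C=\{G_i e\mid i\in I\}$ and, for $J\subseteq I$, its \emph{base flag} $F_J=\{G_j e\mid j\in J\}$ of type $J$; since all these cosets contain the identity, $F_J$ is genuinely a flag and its stabilizer in $G$ is $\bigcap_{j\in J}G_j$. By Proposition~\ref{prop1} we already know that $\Gamma$ is an incidence system possessing a chamber and that $G$ acts on it by automorphisms, so these three properties are all that remain. To see that $\Gamma$ is a geometry, i.e. that every flag lies in a chamber, take a flag $F$ of type $J$: flag-transitivity provides $g\in G$ with $F=F_J g$, and since $F_J\subseteq C$ the chamber $Cg$ contains $F$.

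For thinness, by flag-transitivity it suffices to treat the base flag $F_{I\setminus\{i\}}$ of cotype $\{i\}$. Its stabilizer is $\bigcap_{j\neq i}G_j$, which the intersection property collapses to $\langle\rho_k\mid k\in\bigcap_{j\neq i}(I\setminus\{j\})\rangle=\langle\rho_i\rangle$, a group of order $2$ (as $\rho_i\notin G_i$). A standard orbit argument — valid precisely because $G$ is transitive on flags of each type, so the stabilizer of a flag is transitive on the chambers containing it — shows that $\langle\rho_i\rangle$ acts transitively on the $i$-elements completing $F_{I\setminus\{i\}}$; and the stabilizer of the base $i$-element $G_i e$ inside $\langle\rho_i\rangle$ is $\langle\rho_i\rangle\cap G_i=\{e\}$, again by the intersection property. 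Hence there are exactly $|\langle\rho_i\rangle|=2$ such chambers, which is thinness.

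The main work, and the step I expect to be the real obstacle, is residual connectedness: every residue of rank at least $2$ must be connected. Here I would exploit that residues of a flag-transitive coset geometry are again coset geometries. Using flag-transitivity to reduce to the base flag $F_J$, the residue $\mathrm{Res}(F_J)$ is isomorphic to $\Gamma(G_{(J)};(G_{(J)}\cap G_m)_{m\notin J})$ with $G_{(J)}=\bigcap_{j\in J}G_j$. The intersection property is exactly what makes this tractable: it gives $G_{(J)}=\langle\rho_k\mid k\notin J\rangle$ and $G_{(J)}\cap G_m=\langle\rho_k\mid k\notin J,\ k\neq m\rangle$, so the residue is itself the coset geometry of a C-group of rank $|I\setminus J|$. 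Invoking the criterion that a coset geometry of rank at least $2$ is connected precisely when its maximal parabolic subgroups generate the whole group, connectedness of $\mathrm{Res}(F_J)$ reduces to the identity $\langle\,\langle\rho_k\mid k\notin J,\ k\neq m\rangle : m\notin J\,\rangle=\langle\rho_k\mid k\notin J\rangle$, which holds whenever $|I\setminus J|\geq 2$. The delicate points to get right are the reduction to the base flag — which genuinely needs the strong flag-transitivity hypothesis rather than mere chamber-transitivity — and the bookkeeping of the intersection property across iterated intersections. Once these are in place, the geometry property, thinness and residual connectedness, together with the assumed flag-transitivity, establish that $\Gamma$ is a regular hypertope.
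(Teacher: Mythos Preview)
This theorem is quoted from \cite{HSH} without proof in the present paper, so there is no in-paper argument to compare your proposal against. Your outline is essentially the standard proof (and the one in the source paper): check that $\Gamma$ is a geometry, thin, and residually connected by reducing via flag-transitivity to the base chamber and then invoking the intersection property, which collapses corank-$1$ flag stabilizers to $\langle\rho_i\rangle$ and identifies residues with coset geometries of parabolic C-subgroups. The delicate points you single out---the residue identification $\mathrm{Res}(F_J)\cong\Gamma(G_{(J)};(G_{(J)}\cap G_m)_{m\notin J})$ and the iterated use of the intersection property---are exactly where the work lies, and your handling of them is correct.
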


A regular hypertope with a string Coxeter diagram is an \emph{(abstract) regular polytope}; conversely, if $G$ is a string C-group, the coset incidence system $\Gamma(G;(G_i)_{i\in I})$ is a regular hypertope with string diagram \cite[Theorem 5.1 and Theorem 5.2]{HSH}. Thus, regular polytopes are particular cases of regular hypertopes. We say a regular hypertope is \emph{proper} if it does not have a linear Coxeter diagram. Moreover, regular polytopes are in one-to-one correspondence with string C-groups.

Let $G$ be a string C-group and let $\Gamma(G;(G_i)_{i\in I})$ be the regular polytope built using Proposition~\ref{prop1}. Let $G_{-1} := G =: G_n$. Then, since the generators of a string C-group have a prescribed order, the incidence relation of Proposition~\ref{prop1} is a partial order, where $G_j\phi \leq G_k\psi$ if and only if $-1\leq j\leq k\leq n$ and $G_j\phi \cap G_k\psi\neq \emptyset$. This construction results in a ranked partial ordered set (\emph{poset}) $\mathcal{P}$ which satisfies all axioms of the definition of an abstract regular polytope given in \cite{ARP}. From now on, we will consider abstract regular polytopes as posets instead of coset geometries.

The elements of $\mathcal{P}$ are called \emph{faces} and the rank of the faces are induced by the labeling of the generators of the string C-group. A face $F\in \mathcal{P}$ of rank($F$)$=i$ is called an $i$-face, where the $0$-faces of $\mathcal{P}$ are called \emph{vertices}, the $1$-faces \emph{edges} and the $(n-1)$-faces \emph{facets}. Let $F_0$ be a vertex of $\mathcal{P}$ stabilized by $G_0 := \langle \rho_j | j\neq 0\rangle$. We can identify the vertices of $\mathcal{P}$ by the right cosets of $G_0$.
We say a poset is a \emph{lattice} if, for every two faces $F,G\in\mathcal{P}$, there is a least upper bound and a greatest lower bound for $\{F,G\}$.
Whenever the partial order induces a lattice, we will call $\mathcal{P}$ \emph{non-degenerate}, otherwise we will call it \emph{degenerate} \cite{schulte_regular_1985,danzer_regular_1984}.

The \emph{Schl\"{a}fli type} of a regular polytope $\mathcal{P}$ is defined as $\{p_1,\ldots,p_{n-1}\}$, where $p_i$ is the order of two consecutive generators $o(\rho_{i-1}\rho_i)$.
If a $n$-polytope has type $\{p_1,\ldots,p_{n-1}\}$, we can write its universal automorphism group, a Coxeter group, as $[p_1,\ldots,p_{n-1}]$. The dual polytope of $\mathcal{P}$ is obtained by reversing the partial order of the poset, which is equivalent to reversing the order of the generators of the string C-group. 

Let $\mathcal{P}$ be an abstract regular $n$-polytope and $F_A,F_B\in \mathcal{P}$ be distinct vertices of $\mathcal{P}$.
The unordered pair of vertices $\{F_A,F_B\}$ is called a \emph{diagonal} of $\mathcal{P}$.
Two diagonals $\{F_A, F_B\}$ and $\{F_C,F_D\}$, with $F_C,F_D\in \mathcal{P}$, are said to be \emph{equivalent} if there is some $\sigma\in G(\mathcal{P})$ such that $\{F_C, F_D\} = \{F_A\sigma,F_B\sigma\}$.
Thus, the diagonals of $\mathcal{P}$ form equivalence classes, called \emph{diagonal classes}. 
Since these vertices can be represented as right cosets of $G_0$, we can write a diagonal as $\{G_0\phi, G_0 \psi\}$, where $G_0\phi=F_0\phi = F_A$, $G_0\psi=F_0\psi = F_B$, $F_A\neq F_B$ and $\phi,\psi\in G(\mathcal{P})$. Moreover, due to the transitivity of $G(\mathcal{P})$, we can think of the diagonal classes by their representative $\{G_0, G_0\sigma\}$ for $\sigma\notin G_0$, where we fix one of the vertices as $G_0$. In this case, two diagonals $\{F_0, F_A\} = \{G_0, G_0\phi\}$ and  $\{F_0, F_B\} = \{G_0, G_0\psi\}$ are equivalent under $G(\mathcal{P})$ if and only if 
\begin{equation}\label{eq:doublecosetdiagonal}
 \psi \in G_0\phi G_0 \cup G_0\phi^{-1} G_0,
\end{equation}
for $\phi,\psi\notin G_0$.
If the polytope is realizable in an Euclidean space, the diagonal classes can be ordered by the distance between their representative vertices. For instance, the edges of the polytope $\mathcal{P}$ form a diagonal class.

An abstract regular polytope $\mathcal{P}$ is said to be \emph{centrally symmetric} if its automorphism group $G(\mathcal{P})$ has a proper central involution $\alpha$ which is fixed-point free on its vertices. A pair of vertices of a centrally symmetric polytope is \emph{antipodal} if they are permuted by this central involution. 
In the diagonal classes of a centrally symmetric polytope, there will be a diagonal class of all the pairs of antipodal points, with representative $\{G_0,G_0\alpha\}$.

A regular hypertope is \emph{spherical} if its Coxeter diagram is a union of diagrams of finite irreducible Coxeter groups. 
Moreover, a \emph{locally spherical regular hypertope} is a hypertope whose maximal parabolic residues are spherical hypertopes. 
We say a locally spherical regular hypertope is of \emph{euclidean type} if its Coxeter diagram correspondes to an infinite irreducible Coxeter group of Euclidean type \cite[Table 3B2]{ARP}. A \emph{regular toroidal hypertope} is a quotient of a regular universal hypertope of euclidean type by a normal subgroup of its translational symmetries \cite{spherical}.
A locally spherical regular hypertope is of \emph{hyperbolic type} if its type-preserving automorphism group of its universal cover is of an irreducible compact hyperbolic Coxeter group \cite[Table 2]{spherical}.
Lastly, a regular $n$-polytope or a regular $4$-hypertope is said to be \emph{locally toroidal} if its maximal residues are either spherical or toroidal, with at least one of them being toroidal \cite{ARP,FLPW2019}. A generalization of this concept for hypertopes of rank greater than 4 is yet to be established.

In Table~\ref{tab:centralinvo}, we give a list of all centrally symmetric regular non-degenerate polytopes of spherical type (i.e. finite irreducible Coxeter groups with linear diagram) whose automorphism group is $\langle \tau_0,\ldots,\tau_{n-1}\rangle$ and having central involution $\alpha$.

\begin{table}[h!]
\centering
\caption{The centrally symmetric non-degenerate regular polytopes of spherical type}
\label{tab:centralinvo}
\begin{tabular}{|l|l|c|c|c|}
\hline
Rank                      & \multicolumn{1}{c|}{Schl\"{a}fli type}& Number of vertices  & $\alpha$                                            & $|\mathcal{P}|$         \\ \hline
2                         & $\{2p\}$, for $2\leq p < \infty$         & $2p$  & $(\tau_0\tau_1)^p$                                & $4p$                      \\ \hline
\multirow{4}{*}{3}        & $\{3,4\}$                        & 6   & \multirow{2}{*}{$(\tau_0\tau_1\tau_2)^3$}           & \multirow{2}{*}{48}     \\ \cline{2-3}
                          & $\{4,3\}$              &   8           &                                                   &                         \\ \cline{2-5} 
                          & $\{3,5\}$                &    12        & \multirow{2}{*}{$(\tau_0\tau_1\tau_2)^5$}           & \multirow{2}{*}{120}    \\ \cline{2-3}
                          & $\{5,3\}$                 &    20       &                                                   &                         \\ \hline
\multirow{5}{*}{4}        & $\{3,3,4\}$&      8                    & \multirow{2}{*}{$(\tau_0\tau_1\tau_2\tau_3)^4$}     & \multirow{2}{*}{384}    \\ \cline{2-3}
                          & $\{4,3,3\}$              &    16        &                                                   &                         \\ \cline{2-5} 
                          & $\{3,4,3\}$              &    24        & $(\tau_0\tau_1\tau_2\tau_3)^6$                      & 1152                    \\ \cline{2-5} 
                          & $\{3,3,5\}$              &    120        & \multirow{2}{*}{$(\tau_0\tau_1\tau_2\tau_3)^{15}$}    & \multirow{2}{*}{14400}  \\ \cline{2-3}
                          & $\{5,3,3\}$              &  600          &                                                   &                         \\ \hline
\multirow{2}{*}{$n \geq 5$} & $\{3^{n-2},4\}$    &      $2n$              & \multirow{2}{*}{$(\tau_0\tau_1\ldots\tau_{n-1})^n$} & \multirow{2}{*}{$2^n n!$} \\ \cline{2-3}
                          & $\{4,3^{n-2}\}$       &   $2^n$              &                                                   &                         \\ \hline
\end{tabular}
\end{table}

The only centrally symmetric polygons have an even number of vertices and their proper central involution is a $180$ degrees rotation. For rank 3 and 4, the spherical regular polytopes that are centrally symmetric can be easily computed. For rank $n\geq 5$, the only spherical regular polytopes are the $n$-simplex and the $n$-cube $\{4,3^{n-2}\}$ (and its dual). Since the group of the $n$-simplex is centerless, only the $n$-cube $\{4,3^{n-2}\}$ (and its dual) are centrally symmetric, with $\alpha = (\tau_0\tau_1\ldots\tau_{n-1})^n$ \cite{hartley_new_2008}. Moreover, all the polytopes of Table~\ref{tab:centralinvo} are convex polytopes, meaning that their poset form a face-lattice.

\subsection{Halving Operation of non-degenerate polytopes}\label{halv}

Recently, in \cite{teroasia2020}, the halving operation was revisited and, furthermore, the conditions under which it gives a regular hypertope from a regular polytope were established. In what follows we recall important results that can be found in \cite{teroasia2020}.  

Let $n\geq 3$ and let $\mathcal{P}$ be a regular non-degenerate $n$-polytope of type $\{p_1,\ldots,p_{n-1}\}$ with automorphism group $G(\mathcal{P}) = \langle \rho_0,\ldots,\rho_{n-1}\rangle$. 

The \emph{halving operation} is the map 
$$\eta : \langle \rho_0,\rho_1,\rho_2,\ldots,\rho_{n-1}\rangle \rightarrow \langle \rho_0\rho_1\rho_0,\rho_1,\rho_2,\ldots,\rho_{n-1}\rangle = \langle \tilde{\rho_0},\rho_1,\rho_2,\ldots,\rho_{n-1}\rangle.$$

The \emph{halving group} of $\mathcal{P}$, denoted by $H(\mathcal{P})$, is the image of the halving operation on $G(\mathcal{P})$.
If $n\geq 3$ and $\mathcal{P}$ is non-degenerate, then $H(\mathcal{P}) = \langle \tilde{\rho_0}, \rho_1,\ldots,\rho_{n-1}\rangle$ is a C-group \cite[Theorem 3.1]{teroasia2020} with the following Coxeter diagram

$$\xymatrix@-1.5pc{*{\bullet}\ar@{-}[rrrdd]^(0.01){\rho_1}^{p_2} \\
\\
 &&&*{\bullet}\ar@{-}[rrr]_(0.05){\rho_2}^{p_3}&&&*{\bullet}\ar@{--}[rrrrr]_(0.01){\rho_3}&&&&&*{\bullet}\ar@{-}[rrr]_(0.01){\rho_{n-3}}^{p_{n-2}}&&&*{\bullet}\ar@{-}[rrr]_(0.01){\rho_{n-2}}_(0.99){\rho_{n-1}}^{p_{n-1}}&&&*{\bullet}\\
 \\
 *{\bullet}\ar@{-}[rrruu]_(0.01){\tilde{\rho_0}}_{p_2}\ar@{-}[uuuu]^s\\
 } $$
where $s=p_1$ if $p_1$ is odd, otherwise $s=\frac{p_1}{2}$. In this paper, we will focus on polytopes of type $\{4,p_2,\ldots,p_{n-1}\}$, hence the Coxeter diagram will be as follows.
$$\xymatrix@-1.5pc{*{\bullet}\ar@{-}[rrrdd]^(0.01){\rho_1}^{p_2} \\
\\
 &&&*{\bullet}\ar@{-}[rrr]_(0.05){\rho_2}^{p_3}&&&*{\bullet}\ar@{--}[rrrrr]_(0.01){\rho_3}&&&&&*{\bullet}\ar@{-}[rrr]_(0.01){\rho_{n-3}}^{p_{n-2}}&&&*{\bullet}\ar@{-}[rrr]_(0.01){\rho_{n-2}}_(0.99){\rho_{n-1}}^{p_{n-1}}&&&*{\bullet}\\
 \\
 *{\bullet}\ar@{-}[rrruu]_(0.01){\tilde{\rho_0}}_{p_2}\\
 } $$

Let $\mathcal{H}(\mathcal{P})$ denote the coset indicence system $\Gamma(H(\mathcal{P}), (H_i)_{i\in\{0,\ldots,n-1\}})$ associated with the non-generate regular polytope $\mathcal{P}$, where $H_i$ are the maximal parabolic subgroups of $H(\mathcal{P})$. Then, $\mathcal{H}(\mathcal{P})$ is flag-transitive \cite[Proposition 3.2]{teroasia2020}. Hence, using Theorem~\ref{theorem46}, we have the following corollary.
\begin{corollary}\cite[Corollary 3.2]{teroasia2020}\label{H(P)hypertopes}
 Let $\mathcal{P}$ be a non-degenerate regular $n$-polytope and $I = \{0,\ldots,n-1\}$. Let $H(\mathcal{P})$ be the halving group of $\mathcal{P}$. Then the incidence system $\mathcal{H}(\mathcal{P}) = \Gamma(H(\mathcal{P}), (H_i)_{i\in I})$ is a regular hypertope such that $Aut_I(\mathcal{H}(\mathcal{P})) = H(\mathcal{P})$.
\end{corollary}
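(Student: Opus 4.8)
The plan is to read this as a direct instance of Theorem~\ref{theorem46} applied to the group $G = H(\mathcal{P})$, since Theorem~\ref{theorem46} manufactures a regular hypertope out of any flag-transitive C-group. The two hypotheses it requires --- that $H(\mathcal{P})$ be a C-group and that it act flag-transitively on the associated coset geometry --- are exactly the content of the two results recalled just above the statement, namely \cite[Theorem 3.1]{teroasia2020} and \cite[Proposition 3.2]{teroasia2020}. Consequently the corollary carries essentially no new computational load: the substantive work sits inside those two cited prerequisites, and my task is to assemble them and then settle the one genuinely new assertion, the identification $Aut_I(\mathcal{H}(\mathcal{P})) = H(\mathcal{P})$.

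Concretely, I would first invoke \cite[Theorem 3.1]{teroasia2020}: because $\mathcal{P}$ is a non-degenerate regular $n$-polytope with $n\geq 3$, the halving group $H(\mathcal{P}) = \langle \tilde{\rho_0},\rho_1,\ldots,\rho_{n-1}\rangle$ is a C-group with the displayed Coxeter diagram, its generators being involutions satisfying the intersection property and its maximal parabolics being precisely the subgroups $H_i$ obtained by deleting the $i$-th generator. I would then invoke \cite[Proposition 3.2]{teroasia2020}, which asserts that $H(\mathcal{P})$ is flag-transitive on $\mathcal{H}(\mathcal{P}) = \Gamma(H(\mathcal{P}),(H_i)_{i\in I})$. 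With both hypotheses verified, Theorem~\ref{theorem46} applies verbatim with $G = H(\mathcal{P})$ and $G_i = H_i$, giving that $\mathcal{H}(\mathcal{P})$ is a regular hypertope.

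It then remains to pin down the full type-preserving automorphism group. By Proposition~\ref{prop1}, $H(\mathcal{P})$ already acts on $\mathcal{H}(\mathcal{P})$ by right multiplication as a group of type-preserving automorphisms, and this action is faithful: its kernel lies in the normal core of each $H_i$, hence in $\bigcap_{i\in I} H_i$, which collapses to the identity by repeated use of the intersection property. Flag-transitivity makes this action transitive on chambers. On the other side, since $\mathcal{H}(\mathcal{P})$ is a thin, residually connected geometry, a type-preserving automorphism fixing a single chamber must fix every element (uniqueness of $i$-adjacent chambers propagates along the connected chamber graph), so $Aut_I(\mathcal{H}(\mathcal{P}))$ acts freely on chambers. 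Taking any $\varphi \in Aut_I(\mathcal{H}(\mathcal{P}))$, its image of the base chamber is, by transitivity of $H(\mathcal{P})$, also the image under some $h \in H(\mathcal{P})$; freeness then forces $\varphi = h$, and hence $Aut_I(\mathcal{H}(\mathcal{P})) = H(\mathcal{P})$. (This argument is cardinality-free, so it covers the infinite polytopes that arise later in the paper as well.)

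The main point requiring real care --- though it is imported rather than re-derived here --- is the intersection property underlying \cite[Theorem 3.1]{teroasia2020}, since that is where the non-degeneracy of $\mathcal{P}$ is genuinely needed and where the halving construction can break down if $\mathcal{P}$ is degenerate. Granting that, together with the flag-transitivity of \cite[Proposition 3.2]{teroasia2020}, every remaining step above is formal, which is precisely why the result is phrased as a corollary.
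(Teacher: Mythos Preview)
Your proposal is correct and follows essentially the same route as the paper: the paper does not prove this corollary at all but simply cites it from \cite{teroasia2020}, prefacing it with the remark that $\mathcal{H}(\mathcal{P})$ is flag-transitive by \cite[Proposition 3.2]{teroasia2020} and hence Theorem~\ref{theorem46} applies. Your write-up makes explicit the C-group input from \cite[Theorem 3.1]{teroasia2020} and supplies the standard thinness/residual-connectedness argument for the identification $Aut_I(\mathcal{H}(\mathcal{P})) = H(\mathcal{P})$, which the paper leaves entirely to the cited source; this is faithful elaboration rather than a different approach.
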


If $\mathcal{P}$ is a polytope of type $\{4,p_2,p_3\ldots,p_{n-1}\}$, then we denote the type of $\mathcal{H}(\mathcal{P})$ as $\left\{\nfrac{p_2}{p_2},p_3,\ldots,p_{n-1}\right\}$, and its universal automorphism group as $\left[\nfrac{p_2}{p_2},p_3,\ldots,p_{n-1}\right]$.
Moreover, as stated in \cite{teroasia2020}, $H(\mathcal{P})$ has index $2$ on $G(\mathcal{P})$ if and only if $p_1$ is even.
In Sections~\ref{rank2&n} and \ref{rank3&4} the halving operation will be used on polytopes of type $\{4,p_2,\ldots,p_{n-1}\}$, hence, in all cases that we deal with, $|H(\mathcal{P})| = |G(\mathcal{P})|/2$.

\subsection{The $2^{\mathcal{P},\mathcal{G}(s)}$ polytopes}\label{twisting}

Consider a Coxeter group $W$ generated by $k$ involutions $\sigma_0,\ldots,\sigma_{k-1}$ with Coxeter diagram $\mathcal{G}$, and $\tau_0,\ldots,\tau_{n-1}$ to be involutory automorphisms of $W$, permuting its generators. Then $W$ can be augmented by the group $\Lambda$ generated by these permutations using a semidirect product, resulting in a group $G = W\rtimes \Lambda$. In the case $W$ is a C-group represented by a Coxeter diagram $\mathcal{G}$, the automorphisms $\tau_i$ can be seen as symmetries of $\mathcal{G}$.

\begin{definition}\label{kadmiss}\cite{ARP}
 Let $\mathcal{G}$ be the Coxeter diagram of a C-group and let $\mathcal{P}$ be a regular $n$-polytope with automorphism group $G(\mathcal{P})=\langle \tau_0,\ldots,\tau_{n-1}\rangle$. We say $\mathcal{G}$ is $\mathcal{P}$-\emph{admissible} if:
 \begin{itemize}
  \item The Coxeter diagram $\mathcal{G}$ has more than one node;
  \item $G(\mathcal{P})$ acts transitively on the set of nodes of $\mathcal{G}$, $V(\mathcal{G})$;
  \item The subgroup $\langle \tau_1,\ldots,\tau_{n-1}\rangle$ of $G(\mathcal{P})$ fixes at least one node of $\mathcal{G}$, which we will designate as $F_0$;
  \item The action of $G(\mathcal{P})$ on the diagram $\mathcal{G}$, with respect to $F_0$, respects the intersection property, i.e., for $I\subseteq \{0,\ldots,n-1\}$ and denoting $V(\mathcal{G},I)$ as the set of nodes of $\mathcal{G}$ that the subgroup $\langle \tau_i | i\in I\rangle$ maps the node $F_0$ to, then $$V(\mathcal{G},I)\cap V(\mathcal{G},J) = V(\mathcal{G},I\cap J)\textnormal{ if }I,J\subseteq \{0,\ldots,n-1\}$$
 \end{itemize}
\end{definition}

Let us consider the case $V(\mathcal{G})=V(\mathcal{P})$ and let $F_0$ be a vertex of $\mathcal{P}$. Then $\mathcal{G}$ is $\mathcal{P}$-admissible \cite{ARP}. The number of possible choices for the proper branches of the diagram $\mathcal{G}$ depends on the number of diagonal classes of $\mathcal{P}$. When $\mathcal{P}$ is centrally symmetric, there is an involution $\alpha$ permuting pairs of antipodal vertices of $\mathcal{P}$, forming one diagonal class of $\mathcal{P}$. When this diagonal class is the only one represented in the Coxeter diagram $\mathcal{G}$ by proper branches all with the same label $s$, then the diagram $\mathcal{G}=:\mathcal{G}(s)$ is a matching and the corresponding Coxeter group is a direct product of dihedral groups of degree $s$, $D_s$, which is finite if and only if $\mathcal{P}$ is finite.

Consider a Coxeter group with diagram $\mathcal{G}(s)$, 
\begin{equation}\label{eq:W}
 W := W(\mathcal{G}(s)) = \langle \sigma_F\ |\ F\in V(\mathcal{G}(s)) \rangle,
\end{equation}
 and a centrally symmetric regular polytope $\mathcal{P}$ where $V(\mathcal{G}(s))=V(\mathcal{P})$. Let $F_0$ be a vertex of $\mathcal{P}$. Then the $(n+1)$-polytope $2^{\mathcal{P},\mathcal{G}(s)}$ is defined by the group $$G(2^{\mathcal{P},\mathcal{G}(s)}) := W \rtimes G(\mathcal{P}) = \langle\rho_0,\ldots,\rho_n\rangle $$
 where
\begin{equation}\label{eq:rho_def}
\rho_i := \bigg\{  \begin{matrix} \sigma_{F_0}, & \mbox{for }i = 0, \\ \tau_{i-1}, & \mbox{for }i = 1,\ldots,n+1. \end{matrix}
\end{equation}
For each $F\in V(\mathcal{G})$, there exists an element $\tau \in G(\mathcal{P})$ and an involution $\sigma_F$ of $W$ such that 
\begin{equation}\label{eq:sigmaFtoF0}
 \sigma_F = \sigma_{F_0\tau} = \tau^{-1}\sigma_{F_0}\tau = \tau^{-1}\rho_{0}\tau.
\end{equation}
When $\mathcal{P}$ is centrally symmetric, with central involution $\alpha$, and $\mathcal{G}(s)$ is a matching as before with label $s$, then $(\sigma_F \sigma_{F\alpha})^s = id$. When $s\geq 3$, as all generators $\sigma_{F}$ of $W$ commute with each other, except with $\sigma_{F\alpha}$, $W \cong D_s^{|V(\mathcal{G})|/2}$. Also, for each $F\in V(\mathcal{G})$ and $\tau \in G(\mathcal{P})$, such that $F_0\tau = F$, we have that
\begin{equation}\label{eq:conj_sigmas}
\sigma_F \sigma_{F\alpha} = \tau^{-1}\rho_0\tau \alpha^{-1}\tau^{-1} \rho_0 \tau\alpha = \tau^{-1}\rho_0\alpha\rho_0\alpha\tau.
\end{equation}
In particular, for $s=2$, the diagram $\mathcal{G}(2)$ only has improper branches and $2^{\mathcal{P},\mathcal{G}(2)}$ is the Danzer polytope $2^{\mathcal{P}}$ \cite{danzer_regular_1984}.

The following theorem gives some properties of $2^{\mathcal{P},\mathcal{G}(s)}$ which will be of great importance for our results.

\begin{theorem}\label{2KG(s)}[Theorem 8C5 of \cite{ARP}]
 Let $n\geq 1$, and let $\mathcal{P}$ be a centrally symmetric regular $n$-polytope of type $\{p_1,\ldots,p_{n-1}\}$ with $p_1\geq 3$. Then the regular $(n+1)$-polytope $2^{\mathcal{P},\mathcal{G}(s)}$ has the following properties.
 \begin{enumerate}
  \item $2^{\mathcal{P},\mathcal{G}(s)}$ is of type $\{4,p_1,\ldots,p_{n-1}\}$;
  \item $G(2^{\mathcal{P},\mathcal{G}(s)})= D_s^q\rtimes G(\mathcal{P})$, with $q := |V(\mathcal{P})|/2$, where the action of $G(\mathcal{P})$ on $D_s^q$ ($=W$) is induced by the action on $\mathcal{G}(s)$. In particular, $2^{\mathcal{P},\mathcal{G}(s)}$ is finite if and only if $\mathcal{P}$ is finite, in which case $$\big|G(2^{\mathcal{P},\mathcal{G}(s)})\big| = |D_s|^q \cdot |G(\mathcal{P})| = (2s)^{|V(\mathcal{P})|/2}|G(\mathcal{P})|;$$
  \item If $s$ is even and $\mathcal{P}$ has only finitely many vertices, then $2^{\mathcal{P},\mathcal{G}(s)}$ is also centrally symmetric.
 \end{enumerate}
\end{theorem}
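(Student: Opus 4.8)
The plan is to handle the three assertions separately, using throughout that, by construction, $G := G(2^{\mathcal{P},\mathcal{G}(s)}) = W \rtimes G(\mathcal{P})$ with $W = W(\mathcal{G}(s))$, generators $\rho_0 = \sigma_{F_0}$ and $\rho_i = \tau_{i-1}$ for $1 \le i \le n$, and that $G(\mathcal{P})$ acts on $W$ by $\tau^{-1}\sigma_F\tau = \sigma_{F\tau}$. I would take the polytopality (string C-group property) of $2^{\mathcal{P},\mathcal{G}(s)}$ as furnished by the general $\mathcal{P}$-admissible construction of Definition~\ref{kadmiss}, and concentrate on the type, the group structure, and the central symmetry. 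For (1) I would compute each order $o(\rho_{i-1}\rho_i)$. For $2 \le i \le n$ one has $\rho_{i-1}\rho_i = \tau_{i-2}\tau_{i-1}$, so these orders are exactly $p_1,\dots,p_{n-1}$; moreover $\rho_0 = \sigma_{F_0}$ commutes with each $\rho_i$ for $i \ge 2$ because $\tau_1,\dots,\tau_{n-1}$ fix $F_0$ (so $\sigma_{F_0\tau_{i-1}} = \sigma_{F_0}$), which gives the linear diagram. The one substantive order is $o(\rho_0\rho_1)$: using $\tau_0^2 = \mathrm{id}$ and the action, $(\rho_0\rho_1)^2 = \sigma_{F_0}\tau_0\sigma_{F_0}\tau_0 = \sigma_{F_0}\sigma_{F_0\tau_0}$. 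Since $\tau_0 \notin G_0$ the vertices $F_0$ and $F_0\tau_0$ are distinct, so $\sigma_{F_0} \ne \sigma_{F_0\tau_0}$; and since $p_1 \ge 3$ the edge $\{F_0, F_0\tau_0\}$ is not the antipodal pair, so in the matching $\mathcal{G}(s)$ this pair carries an improper branch and the two involutions commute. A product of two distinct commuting involutions has order $2$, whence $(\rho_0\rho_1)^2$ is an involution and $o(\rho_0\rho_1) = 4$, yielding the type $\{4, p_1, \dots, p_{n-1}\}$.

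For (2) I would read off the structure of $W$ from its diagram: $\mathcal{G}(s)$ is a perfect matching pairing each $F$ with $F\alpha$, i.e.\ a disjoint union of $q = |V(\mathcal{P})|/2$ single edges labelled $s$. A Coxeter group whose diagram splits into components is the direct product of the component groups, and a single edge labelled $s$ gives the dihedral group $D_s$ of order $2s$; hence $W \cong D_s^q$, as already observed before the theorem. The action of $G(\mathcal{P})$ permutes the generators $\sigma_F$ according to its action on $V(\mathcal{P}) = V(\mathcal{G}(s))$, and since $\alpha$ is central it carries antipodal pairs to antipodal pairs, so this action is by automorphisms of $W$ and the decomposition $G = D_s^q \rtimes G(\mathcal{P})$ holds. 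Multiplying orders gives $|G| = (2s)^{q}\,|G(\mathcal{P})|$, and as $W$ is finite exactly when $V(\mathcal{P})$ is finite, $G$ is finite iff $\mathcal{P}$ is.

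For (3), assuming $s$ even and $V(\mathcal{P})$ finite, I would exhibit an explicit central involution. For each antipodal pair $\{F, F\alpha\}$ set $z_{\{F,F\alpha\}} := (\sigma_F\sigma_{F\alpha})^{s/2}$, the half-turn generating the centre of the corresponding $D_s$ factor; this is a nontrivial involution precisely because $s$ is even, and it is independent of the chosen representative of the pair. Define $\zeta := \prod_P z_P$ over the $q$ pairs, a finite product since $V(\mathcal{P})$ is finite; then $\zeta$ is a nontrivial involution in the centre of $W$. The crux, and the step I expect to be the main obstacle, is checking that $\zeta$ commutes with $G(\mathcal{P})$: conjugation by $\tau$ sends $\sigma_F \mapsto \sigma_{F\tau}$ and, because $\alpha$ is central, sends $z_{\{F,F\alpha\}} \mapsto z_{\{F\tau, F\tau\alpha\}}$, so it merely permutes the factors $z_P$ and hence fixes their product, giving $\tau^{-1}\zeta\tau = \zeta$ and $\zeta \in Z(G)$. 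Finally, the vertices of $2^{\mathcal{P},\mathcal{G}(s)}$ are the right cosets of $G_0 = \langle \rho_1,\dots,\rho_n\rangle = G(\mathcal{P})$, and a central $\zeta$ fixes $G_0 g$ iff $\zeta \in G_0$; but $\zeta \in W \setminus \{\mathrm{id}\}$ while $W \cap G(\mathcal{P}) = \{\mathrm{id}\}$, so $\zeta \notin G_0$ and $\zeta$ is fixed-point-free on vertices. Thus $\zeta$ is a proper central involution witnessing central symmetry. The hypotheses enter at exactly two points: $p_1 \ge 3$ separates the edge class from the antipodal class in part (1), and $s$ even together with the centrality of $\alpha$ drives the construction and centrality of $\zeta$ in part (3); everything else is bookkeeping inside the semidirect product.
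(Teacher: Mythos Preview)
Your argument is correct and follows the natural line one would take for this result. However, note that the paper does \emph{not} supply its own proof of Theorem~\ref{2KG(s)}: the statement is quoted verbatim as Theorem~8C5 of \cite{ARP} and is used as a black box throughout Sections~\ref{rank2&n} and~\ref{rank3&4}. There is therefore nothing in the paper to compare your proposal against beyond the bare citation.

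That said, your sketch is essentially the standard proof one finds in \cite{ARP}. The computation in part~(1) that $o(\rho_0\rho_1)=4$ because $\sigma_{F_0}$ and $\sigma_{F_0\tau_0}$ are distinct commuting involutions is exactly the point where the hypothesis $p_1\ge 3$ enters, and your identification of $W\cong D_s^q$ in part~(2) simply unpacks the remark already made in the paper just before the theorem. Your construction of $\zeta=\prod_P(\sigma_F\sigma_{F\alpha})^{s/2}$ in part~(3) is the canonical choice of central involution, and your verification that conjugation by $G(\mathcal{P})$ permutes the factors $z_P$ (using centrality of $\alpha$) and hence fixes $\zeta$ is the key step. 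One minor point worth tightening: the assertion in part~(1) that $F_0\tau_0\ne F_0\alpha$ when $p_1\ge 3$ deserves a sentence of justification (e.g.\ if they coincided then $\alpha\tau_0\in G_0$, forcing the base $2$-face to have only two vertices), but this is routine.
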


Notice that if $\mathcal{P}$ is the Coxeter group $[p_1,\ldots,p_{n-1}]$ factorized by a set of relations $R$, then $2^{\mathcal{P},\mathcal{G}(s)}$ is a Coxeter group $[4,p_1,\ldots,p_{n-1}]$ factorized by the relations in $R$ and the extra relations 
$$(\rho_0\alpha\rho_0\alpha)^s = id,$$
where $\alpha$ is the central involution of $\mathcal{P}$, and
$$(\rho_0\tau^{-1}\rho_0\tau)^2 = id$$
for all $\tau\in G(\mathcal{P})$ such that $\tau\neq \alpha$ and $\{F_0,F_0\tau\}$ give distinct diagonal classes of $\mathcal{P}$.

When $\mathcal{P}$ is non-degenerate, this construction gives a non-degenerate polytope, as expressed the next lemma.

\begin{lemma}\label{2KGlattice}[pp. 264 of \cite{ARP}]
Let $\mathcal{P}$ be a centrally symmetric regular $n$-polytope of type $\{p_1,\ldots,p_{n-1}\}$ with $p_1\geq 3$. If the poset of $\mathcal{P}$ is a lattice, then the poset of $2^{\mathcal{P},\mathcal{G}(s)}$ is a lattice. 
\end{lemma}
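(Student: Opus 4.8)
The plan is to transfer the lattice structure of $\mathcal{P}$ through an explicit combinatorial description of the faces of $2^{\mathcal{P},\mathcal{G}(s)}$, the description underlying the construction of Section~8C of \cite{ARP} and recoverable from the coset incidence of Proposition~\ref{prop1} applied to $G(2^{\mathcal{P},\mathcal{G}(s)})=W\rtimes G(\mathcal{P})$ of Theorem~\ref{2KG(s)}. First I would record the face dictionary: for $0\leq j\leq n$ the $j$-faces of $2^{\mathcal{P},\mathcal{G}(s)}$ are the cosets of the maximal parabolic subgroup $\langle\rho_i\mid i\neq j\rangle$, and these are in bijection with pairs $(G,x)$, where $G$ is a $(j-1)$-face of $\mathcal{P}$ (the empty face of $\mathcal{P}$ corresponding to the vertices of $2^{\mathcal{P},\mathcal{G}(s)}$ and $\mathcal{P}$ itself to its maximal face) and $x$ is the residual $W$-label carried by the coordinate directions not spanned by $G$. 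In this dictionary the partial order reads $(G,x)\leq(G',x')$ if and only if $G\leq G'$ in $\mathcal{P}$ and $x'$ is the restriction of $x$ to the directions not spanned by $G'$; equivalently, $x$ and $x'$ must agree on every direction that is free in neither face. Securing this dictionary and the order formula is the foundational step.

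With the dictionary in place, meets and joins are computed direction-wise and reduce to operations in $\mathcal{P}$. Given faces $(G_1,x_1)$ and $(G_2,x_2)$, let $D\subseteq V(\mathcal{P})$ be the set of vertices spanning the directions on which $x_1$ and $x_2$ differ. Every common lower bound $(H,y)$ must satisfy $H\leq G_1\wedge G_2$ together with $D\subseteq V(H)$; since $\mathcal{P}$ is a lattice, $G_1\wedge G_2$ exists, and the greatest admissible $H$ is $G_1\wedge G_2$ itself when $D\subseteq V(G_1\wedge G_2)$ and the minimal face of $2^{\mathcal{P},\mathcal{G}(s)}$ otherwise, yielding a unique meet. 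Dually, every common upper bound $(H,y)$ must satisfy $H\geq G_1\vee G_2$ and $D\subseteq V(H)$, so the least admissible face is $H^{\ast}=(G_1\vee G_2)\vee\bigvee_{d\in D}d$, which exists and is unique precisely because $\mathcal{P}$ is a lattice. Equipping $H^{\ast}$ with the forced label $y=x_1|_{V(\mathcal{P})\setminus V(H^{\ast})}$ gives the unique join. Hence every pair of faces has both a meet and a join, so $2^{\mathcal{P},\mathcal{G}(s)}$ is a lattice.

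The hard part is the first step rather than the lattice bookkeeping of the second. Deriving the face dictionary, and in particular the clean order relation, from the semidirect product $W\rtimes G(\mathcal{P})$ needs care when $s\geq 3$: the labels then live in $W\cong D_s^{q}$ with $q=|V(\mathcal{P})|/2$, so a coordinate direction is an antipodal pair $\{F,F\alpha\}$ rather than a single vertex, and one must verify that a proper face $G$ of $\mathcal{P}$ meets these pairs coherently, using that the central involution $\alpha$ is fixed-point free so that $V(G)$ and $V(G)\alpha$ are disjoint for proper $G$. One must also treat the improper faces and check that the restriction maps on labels are well defined on the relevant $D_s$-factors. Once the dictionary is established, the lattice property of $\mathcal{P}$ transfers verbatim through the meet and join formulas above, and no hypothesis beyond \emph{$\mathcal{P}$ is a lattice} is required.
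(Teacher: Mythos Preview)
The paper does not prove this lemma at all: it is quoted verbatim from McMullen--Schulte \cite[p.~264]{ARP} and used as a black box, so there is no ``paper's proof'' to compare against. Your proposal is essentially the argument one finds in \cite[Section~8C]{ARP}: describe the $j$-faces of $2^{\mathcal{P},\mathcal{G}(s)}$ as pairs $(G,x)$ with $G$ a $(j{-}1)$-face of $\mathcal{P}$ and $x$ a residual $W$-label, read off the order relation, and then compute meets and joins coordinatewise using the lattice structure of $\mathcal{P}$.

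Two remarks on the execution. First, your meet condition ``$D\subseteq V(H)$'' is vacuous as written: since $H\leq G_1\wedge G_2$ forces $V(H)\subseteq V(G_1)\cap V(G_2)$ while $D$ lies outside $V(G_1)\cup V(G_2)$, the condition reduces to $D=\varnothing$. The correct dichotomy is simply: if $D=\varnothing$ the meet is $(G_1\wedge G_2,\,y)$ with $y$ the unique label agreeing with $x_1$ and $x_2$ on their respective domains (here one uses $V(G_1\wedge G_2)=V(G_1)\cap V(G_2)$, which does follow from $\mathcal{P}$ being a lattice); if $D\neq\varnothing$ the meet is $F_{-1}$. Your join argument, by contrast, is stated correctly. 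Second, for $s\geq 3$ the label $x$ lives in a product of dihedral factors $D_s$ indexed by antipodal pairs, not in $\{0,1\}^{V(\mathcal{P})}$, and the restriction maps are subtler than simple coordinate projection: when $G$ contains exactly one vertex of a pair $\{F,F\alpha\}$, the corresponding $D_s$-factor does not simply disappear but collapses to a $\mathbb{Z}_2$-quotient (the coset of $\langle\sigma_F\rangle$ in the dihedral factor). You flag this as the hard part, and it is; the write-up would need to make the face dictionary and the restriction maps precise before the lattice bookkeeping can be trusted.
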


\section{Polytopes ${2}^{\mathcal{P},\mathcal{G}(s)}$ and Hypertopes $\mathcal{H}({2}^{\mathcal{P},\mathcal{G}(s)})$ when $\mathcal{P}$ is a $2p$-gon, $n$-cube or $n$-orthoplex}\label{rank2&n}

In Table~\ref{tab:centralinvo} we were introduced to the centrally symmetric polytopes $\mathcal{P}$ that we will consider in this paper. There are three infinite families of polytopes given in that table: the $2p$-gons with type $\{2p\}$, the $n$-cube with type $\{4,3^{n-2}\}$ and the $n$-orthoplex with type $\{3^{n-2},4\}$. In the following sections, we will construct extensions of these polytopes and then apply the halving operation as defined in Section~\ref{halv} to obtain families of hypertopes.
Moreover, families of proper regular toroidal hypertopes $\{\nfrac{3}{3},3^{n-3},4\}_{(2s,0^{n-1})}$ will be given for an arbitrary rank and $s$, extending the results of \cite{rank4toroidal} and \cite{teroasia2020}, where the duals of the hypertopes $\left\{\nfrac{3}{3},4\right\}_{(2s,0,0)}$ and $\left\{\nfrac{3}{3},3^{n-3},4\right\}_{(4,0^{n-1})}$ are presented, respectively.

\subsection{The polytope ${2 }^{\{2p\},\mathcal{G}(s)}$ and hypertope $\mathcal{H}({2}^{\{2p\},\mathcal{G}(s)})$}\label{ss:2p}

Consider the following polytopes, defined as below.
\begin{definition}\label{def:skewpoly}\cite[Section 7B]{ARP}
 Let $2\leq j \leq k := \lfloor\frac{1}{2}q\rfloor$. Then, we define the polytope $ \mathcal{P} := \{p,q\ |\ h_2,\ldots,h_k\}$
 such that its automorphism group $G(\mathcal{P})$ has the following presentation
 \begin{equation*}\begin{matrix}
  G(\mathcal{P}) :=& \langle\ \rho_0, \rho_1, \rho_2\ |\  \rho_0^2 = \rho_1^2 = \rho_2^2 = (\rho_0\rho_1)^p = (\rho_1\rho_2)^q = (\rho_0\rho_2)^2 = id, \\
    &  \{ (\rho_0\rho_1(\rho_2\rho_1)^{j-1})^{h_j} = id,\mbox{ for } 2\leq j \leq k\}\, \rangle
                                                                                              \end{matrix}
 \end{equation*}
\end{definition}

Let $\mathcal{P}$ be the polygons with even number of vertices, i.e. of type $\{2p\}$, for $p\geq 2$.
From Corollary 8C7 of \cite{ARP}, we have the following result.
\begin{corollary}\label{cor:2p_polytope}\cite[Corollary 8C7]{ARP}
 Let $2\leq p < \infty$ and $2\leq s < \infty$. Then ${2 }^{\{2p\},\mathcal{G}(s)} = \{4,2p\ |\ 4^{p-2}, 2s\}$, with group $D_s^p\rtimes D_{2p}$, of order $(2s)^p\cdot 4p$. If $p=2$, this is the torus map $\{4,4\}_{(2s,0)}$, with group $(D_s\times D_s)\rtimes D_4$ of order $32s^2$.
\end{corollary}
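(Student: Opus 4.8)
The plan is to extract the group and its order directly from Theorem~\ref{2KG(s)}, and then to reconcile the intrinsic defining relations of $2^{\{2p\},\mathcal{G}(s)}$ with the ``hole'' relations of Definition~\ref{def:skewpoly}. First I would record that the $2p$-gon $\{2p\}$ is a centrally symmetric non-degenerate regular $2$-polytope (Table~\ref{tab:centralinvo}) with central involution $\alpha=(\tau_0\tau_1)^p$, first Schl\"afli entry $p_1=2p\geq 3$, and $|V(\{2p\})|=2p$, so that $q=|V(\{2p\})|/2=p$. Applying Theorem~\ref{2KG(s)} then immediately yields that $2^{\{2p\},\mathcal{G}(s)}$ is a regular $3$-polytope of type $\{4,2p\}$ with group $D_s^p\rtimes D_{2p}$ of order $(2s)^p\cdot 4p$, while Lemma~\ref{2KGlattice} guarantees it is non-degenerate. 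This settles the group and the order, leaving only the identification of the full symbol $\{4,2p\mid 4^{p-2},2s\}$.

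For the presentation I would invoke the description of $G(2^{\mathcal{P},\mathcal{G}(s)})$ as a quotient of the Coxeter group $[4,2p]$ recalled just after Theorem~\ref{2KG(s)}. Since the polygon $\{2p\}$ is already the full Coxeter group $[2p]$ (its extra relation set is empty), $2^{\{2p\},\mathcal{G}(s)}$ is $[4,2p]$ factored by the antipodal relation $(\rho_0\alpha\rho_0\alpha)^s=id$ together with the relations $(\rho_0\tau^{-1}\rho_0\tau)^2=id$, one for each non-antipodal diagonal class of $\{2p\}$. The diagonal classes of the $2p$-gon are indexed by the distance $d\in\{1,\dots,p\}$ between the two vertices, the class $d=p$ being the antipodal one; so these relations encode exactly that $\sigma_{F_0}$ commutes with $\sigma_F$ whenever $F$ lies at distance $1\le d\le p-1$ from $F_0$, while $(\sigma_{F_0}\sigma_{F_0\alpha})^s=id$, which is precisely the defining structure of $W=D_s^p$.

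It then remains to match these with the hole relations $(\rho_0\rho_1(\rho_2\rho_1)^{j-1})^{h_j}=id$ for $2\le j\le k=\lfloor 2p/2\rfloor=p$. The key computation, which I expect to be the main (though elementary) obstacle, is to write the hole generator as $w_j:=\rho_0\rho_1(\rho_2\rho_1)^{j-1}=\sigma_{F_0}\,g_j$ with $g_j:=\rho_1(\rho_2\rho_1)^{j-1}\in G(\{2p\})$, to note that $g_j$ is an involution (an odd-length word in the two reflections $\rho_1,\rho_2$), and to use the semidirect-product action together with \eqref{eq:sigmaFtoF0} to obtain $w_j^2=\sigma_{F_0}\,g_j\sigma_{F_0}g_j^{-1}=\sigma_{F_0}\sigma_{F_0 g_j}$. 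A direct check in the dihedral group $G(\{2p\})$, with $\rho_2$ fixing the base vertex $F_0$, shows that $g_j$ carries $F_0$ to the vertex at distance $j$ from it. Reading off the order of $\sigma_{F_0}\sigma_{F_0 g_j}$ from the structure of $W$ then gives $o(w_j)=4$ when $F_0 g_j$ is at distance $1\le j\le p-1$ and $o(w_j)=2s$ when $F_0 g_j=F_0\alpha$, i.e.\ $j=p$; hence $h_j=4$ for $2\le j\le p-1$ and $h_p=2s$. The case $j=1$, whose relation $(\rho_0\rho_1)^4=id$ is the Schl\"afli relation already present in $[4,2p]$, contributes nothing. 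Since the hole relations and the extension relations therefore cut out the same quotient of $[4,2p]$, we conclude $2^{\{2p\},\mathcal{G}(s)}=\{4,2p\mid 4^{p-2},2s\}$.

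Finally, for $p=2$ the string of $4$'s is empty and the symbol collapses to $\{4,4\mid 2s\}$, with $h_2=2s$; I would identify this with the reflexible toroidal map $\{4,4\}_{(2s,0)}$ via the standard fact that $\{4,4\mid m\}=\{4,4\}_{(m,0)}$. Specialising the group and order above to $p=2$ gives $D_s^2\rtimes D_4=(D_s\times D_s)\rtimes D_4$ of order $(2s)^2\cdot 8=32s^2$, as claimed.
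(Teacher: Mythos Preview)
The paper does not actually prove this corollary: it is quoted verbatim from \cite[Corollary 8C7]{ARP} and used as input for Proposition~\ref{prop:2p_pol_presentation}. So there is no ``paper's own proof'' to compare against; your proposal supplies the argument that the paper outsources to \cite{ARP}.

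Your argument is correct. The group and order come straight from Theorem~\ref{2KG(s)}, and the identification with $\{4,2p\mid 4^{p-2},2s\}$ is exactly the point that needs work. The key step---writing the $j$th hole element as $w_j=\sigma_{F_0}g_j$ with $g_j=\rho_1(\rho_2\rho_1)^{j-1}$ an involution of $D_{2p}=\langle\rho_1,\rho_2\rangle$, and then computing $w_j^2=\sigma_{F_0}\,g_j\sigma_{F_0}g_j^{-1}=\sigma_{F_0}\sigma_{F_0g_j}$---is precisely what makes the hole relation $w_j^{2k}=id$ coincide \emph{as a word in $[4,2p]$} with the extension relation $(\rho_0\,g_j^{-1}\rho_0\,g_j)^k=id$. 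Since the $g_j$ for $1\le j\le p$ hit one representative of each diagonal class of the $2p$-gon (with $j=1$ giving the Schl\"afli relation and $j=p$ the antipodal one), the two presentations impose literally the same relators on $[4,2p]$, and no separate order comparison is needed. The $p=2$ specialisation to $\{4,4\mid 2s\}=\{4,4\}_{(2s,0)}$ is standard.

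One small remark: you might make explicit that $g_j^2=id$ holds already in the Coxeter group $[4,2p]$ (because $\langle\rho_1,\rho_2\rangle\cong D_{2p}$ there), so the identity $w_j^2=\rho_0\,g_j\rho_0\,g_j^{-1}$ is valid at the level of the universal group and not merely in the semidirect product; this is what lets you conclude that the two sets of relators define the same quotient rather than just that one surjects onto the other.
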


We write $4^{p-2}$ to mean a row of 4's of size $(p-2)$. Using this result and Definition~\ref{def:skewpoly}, we have the following proposition.

\begin{proposition}\label{prop:2p_pol_presentation}
 Let $2\leq p < \infty$ and $2\leq s < \infty$. Then the group $G({2 }^{\{2p\},\mathcal{G}(s)}) = [4,2p\ |\ 4^{p-2}, 2s]$ has the following presentation
  $$G({2 }^{\{2p\},\mathcal{G}(s)}) := \langle\ \rho_0, \rho_1, \rho_2\ |\  \rho_0^2 = \rho_1^2 = \rho_2^2 = (\rho_0\rho_1)^4 = (\rho_1\rho_2)^{2p} = (\rho_0\rho_2)^2 = id,$$ $$
            \{ (\rho_0\rho_1(\rho_2\rho_1)^{j-1})^{4} = id,\mbox{ for } 2\leq j \leq p-1\}, (\rho_0\rho_1(\rho_2\rho_1)^{p-1})^{2s} = id\ \rangle. $$                                                                          
\end{proposition}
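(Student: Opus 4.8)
The plan is to read the presentation off directly from Corollary~\ref{cor:2p_polytope} combined with the generic presentation recorded in Definition~\ref{def:skewpoly}. By Corollary~\ref{cor:2p_polytope}, the polytope $2^{\{2p\},\mathcal{G}(s)}$ equals the map $\{4,2p\ |\ 4^{p-2},2s\}$, so it suffices to instantiate the presentation of Definition~\ref{def:skewpoly} with first Schl\"{a}fli entry $4$, second Schl\"{a}fli entry $2p$, and hole sequence $(h_2,\ldots,h_k)=(4^{p-2},2s)$. The whole argument is thus a careful substitution, and the only genuine content is to check that the index bookkeeping matches.

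First I would pin down the index range. In Definition~\ref{def:skewpoly} the holes run over $2\leq j\leq k$ with $k=\lfloor\tfrac12 q\rfloor$; taking the second Schl\"{a}fli entry to be $q=2p$ gives $k=p$, so the holes form the list $h_2,\ldots,h_p$ of exactly $p-1$ entries. This length matches the sequence $4^{p-2},2s$, which consists of $p-2$ copies of $4$ followed by a single $2s$, and reading it in order forces $h_j=4$ for $2\leq j\leq p-1$ and $h_p=2s$. Feeding these values into the generic relator $(\rho_0\rho_1(\rho_2\rho_1)^{j-1})^{h_j}=id$ yields the relators $(\rho_0\rho_1(\rho_2\rho_1)^{j-1})^{4}=id$ for $2\leq j\leq p-1$ together with $(\rho_0\rho_1(\rho_2\rho_1)^{p-1})^{2s}=id$, while the first line of Definition~\ref{def:skewpoly} supplies $\rho_0^2=\rho_1^2=\rho_2^2=(\rho_0\rho_1)^4=(\rho_1\rho_2)^{2p}=(\rho_0\rho_2)^2=id$ under the same substitution. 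This is precisely the asserted presentation; since Corollary~\ref{cor:2p_polytope} simultaneously identifies the group as $D_s^p\rtimes D_{2p}$, the notation $[4,2p\ |\ 4^{p-2},2s]$ is justified.

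The step needing the most care is the boundary case $p=2$, which I would verify separately since several of the index sets above degenerate. There $k=2$, the block $4^{p-2}=4^0$ is empty, so the hole sequence reduces to the single entry $h_2=2s$; the index set $2\leq j\leq p-1$ is vacuous and the final relator collapses to $(\rho_0\rho_1\rho_2\rho_1)^{2s}=id$. This coincides with the defining relation of the toroidal map $\{4,4\}_{(2s,0)}$, in agreement with the second assertion of Corollary~\ref{cor:2p_polytope}, so the presentation remains valid across the full range $2\leq p<\infty$.
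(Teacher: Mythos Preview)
Your proof is correct and follows exactly the approach the paper takes: the paper's own proof simply states that the result follows from Corollary~\ref{cor:2p_polytope} and Definition~\ref{def:skewpoly}, and you have carried out that substitution explicitly, with the added care of checking the index bookkeeping and the degenerate case $p=2$.
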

\begin{proof}
 The proof follows from Corollary~\ref{cor:2p_polytope} and Definition~\ref{def:skewpoly}.
\end{proof}

From the polytopes of the previous proposition, we derive a family of polytopes using the halving operation.

\begin{proposition}
 Let $2\leq p < \infty$, $2\leq s < \infty$. The incidence system $$\mathcal{H}({2 }^{\{2p\},\mathcal{G}(s)}) = \Gamma(H({2 }^{\{2p\},\mathcal{G}(s)}), (H_i)_{i\in \{0,1,2\}}),$$ where $H({2 }^{\{2p\},\mathcal{G}(s)}):=\langle \rho_0\rho_1\rho_0, \rho_1, \rho_2\rangle = \langle \tilde{\rho_0}, \rho_1, \rho_2\rangle $, is a regular polytope of type $\{2p,2p\}$, where its automorphism group, of size $(2s)^p\cdot 2p$, is the quotient of the Coxeter group $[2p,2p]$ by the relations $((\tilde{\rho_0}\rho_2)^{j-1}\tilde{\rho_0}\rho_1(\rho_2\rho_1)^{j-1})^{2} = id$, for $2\leq j \leq p-1$, and $((\tilde{\rho_0}\rho_2)^{p-1}\tilde{\rho_0}\rho_1(\rho_2\rho_1)^{p-1})^{s} = id$.            
\end{proposition}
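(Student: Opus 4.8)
The plan is to first pin down the combinatorial type and the order of $\mathcal{H}({2}^{\{2p\},\mathcal{G}(s)})$, and only then establish the presentation by showing that the listed relations both hold in the halving group and suffice to describe it. Write $G := G({2}^{\{2p\},\mathcal{G}(s)})$ and $H := H({2}^{\{2p\},\mathcal{G}(s)}) = \langle\tilde{\rho_0},\rho_1,\rho_2\rangle$ with $\tilde{\rho_0} = \rho_0\rho_1\rho_0$. Since the polygon $\{2p\}$ has a face-lattice, Lemma~\ref{2KGlattice} makes ${2}^{\{2p\},\mathcal{G}(s)}$ non-degenerate, so Corollary~\ref{H(P)hypertopes} guarantees that $\mathcal{H}({2}^{\{2p\},\mathcal{G}(s)})$ is a regular hypertope with automorphism group $H$. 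To see that it is in fact a polytope of type $\{2p,2p\}$, I would compute the three relevant orders directly from $\tilde{\rho_0}=\rho_0\rho_1\rho_0$: one finds $\tilde{\rho_0}\rho_1 = (\rho_0\rho_1)^2$ has order $2$ (so $\tilde{\rho_0}$ and $\rho_1$ commute and the diagram is linear), while $\tilde{\rho_0}\rho_2 = \rho_0(\rho_1\rho_2)\rho_0$ is conjugate to $\rho_1\rho_2$ and hence, like $\rho_1\rho_2$, has order $2p$. For the order, $p_1 = 4$ is even, so by the remark following Corollary~\ref{H(P)hypertopes} the index $[G:H]=2$, and Proposition~\ref{prop:2p_pol_presentation} gives $|G| = (2s)^p\cdot 4p$, whence $|H| = (2s)^p\cdot 2p$.

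The crux is the presentation, and for this I would record a rewriting identity for the ``long'' relators. Setting $R_j := \rho_0\rho_1(\rho_2\rho_1)^{j-1}$, repeated use of $\rho_0\rho_1 = \tilde{\rho_0}\rho_0$, $\rho_0\rho_2 = \rho_2\rho_0$ and $\rho_0\tilde{\rho_0}\rho_0 = \rho_1$ should give
\begin{equation*}
R_j = (\tilde{\rho_0}\rho_2)^{j-1}\tilde{\rho_0}\rho_0, \qquad R_j^2 = (\tilde{\rho_0}\rho_2)^{j-1}\tilde{\rho_0}\rho_1(\rho_2\rho_1)^{j-1} =: S_j,
\end{equation*}
so that $S_j^2 = R_j^4$ and $S_p^s = R_p^{2s}$. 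Consequently the defining relations $R_j^4 = id$ (for $2\le j\le p-1$) and $R_p^{2s}=id$ of $G$ become exactly $S_j^2 = id$ and $S_p^s = id$, which therefore hold in $H$; together with the Coxeter relations of $[2p,2p]$ from the previous paragraph, all the relations claimed in the statement hold in $H$. This already yields a surjection $\hat{H}\twoheadrightarrow H$, where $\hat{H}$ denotes the abstractly presented group.

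It remains to prove the reverse bound $|\hat{H}|\le(2s)^p\cdot 2p$, which I regard as the main obstacle. My approach would be to reconstruct $G$ from $\hat{H}$: define the map $\theta$ on generators by swapping $\tilde{\rho_0}\leftrightarrow\rho_1$ and fixing $\rho_2$, and check that it preserves every defining relator of $\hat{H}$ --- the key point being that $\theta$ sends $S_j$ to $S_j^{-1}$, so that $S_j^2=id$ and $S_p^s=id$ are preserved. Then $\theta$ is an involutory automorphism of $\hat{H}$, and in the semidirect product $\hat{G}:=\hat{H}\rtimes\langle x\rangle$, with $x^2=1$ acting as $\theta$, one has $|\hat{G}| = 2|\hat{H}|$. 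Writing $\rho_0 := x$, the identity $R_j^2 = S_j$ --- which only uses the relations encoding the action of $\theta$ --- lets one verify that $\rho_0,\rho_1,\rho_2$ satisfy all the defining relations of $G$, producing a surjection $G\twoheadrightarrow\hat{G}$. Hence $2|\hat{H}| = |\hat{G}|\le|G| = (2s)^p\cdot 4p$, giving $|\hat{H}|\le(2s)^p\cdot 2p = |H|$; combined with the surjection $\hat{H}\twoheadrightarrow H$, this forces $\hat{H}\cong H$ and completes the proof. The anticipated difficulty lies entirely in the two bookkeeping verifications --- that $\theta$ respects the relators and that $\hat{G}$ satisfies the relations of $G$ --- both of which reduce to the single identity $R_j^2 = S_j$.
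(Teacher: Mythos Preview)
Your argument is correct and in fact more complete than the paper's. Both proofs share the same spine: non-degeneracy of ${2}^{\{2p\},\mathcal{G}(s)}$ via Lemma~\ref{2KGlattice}, polytopality of the halving via Corollary~\ref{H(P)hypertopes}, and the key rewriting identity $R_j^2=S_j$ (the paper derives it as $(\rho_0\rho_1(\rho_2\rho_1)^{j-1})^4=((\tilde{\rho_0}\rho_2)^{j-1}\tilde{\rho_0}\rho_1(\rho_2\rho_1)^{j-1})^2$ by pushing $\rho_0$ through using $\rho_0\rho_2=\rho_2\rho_0$, which is exactly your computation). Where the paper simply cites \cite[Section~7B]{ARP} for the Schl\"afli type $\{2p,2p\}$, you compute the three orders $o(\tilde{\rho_0}\rho_1)=2$, $o(\tilde{\rho_0}\rho_2)=o(\rho_1\rho_2)=2p$ directly; this is a harmless variation.

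The genuine difference is that the paper stops after showing the relations \emph{hold} in $H$, leaving their \emph{sufficiency} implicit, whereas you close this gap with the semidirect-product trick: building $\hat G=\hat H\rtimes\langle x\rangle$ with $x$ acting by the swap $\tilde{\rho_0}\leftrightarrow\rho_1$, and exhibiting a surjection $G\twoheadrightarrow\hat G$ to bound $|\hat H|$. Your verification reduces to $\theta(S_j)=S_j^{-1}$ and $R_j^2=S_j$ in $\hat G$, both of which are straightforward (indeed $R_j^2=x\,\rho_1(\rho_2\rho_1)^{j-1}\,x\cdot\rho_1(\rho_2\rho_1)^{j-1}=\theta(\rho_1(\rho_2\rho_1)^{j-1})\,\rho_1(\rho_2\rho_1)^{j-1}=S_j$). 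This extra step is a real improvement in rigor over the paper's treatment; what it buys is an unconditional proof that the displayed relations present $H$, rather than relying on an implicit Reidemeister--Schreier argument from the index-$2$ inclusion $H\le G$.
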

\begin{proof}
 Let $2\leq p < \infty$, $2\leq s < \infty$. The fact that the incidence system of the halving group $H({2 }^{\{2p\},\mathcal{G}(s)})$ is a regular hypertope follows from the fact that ${2 }^{\{2p\},\mathcal{G}(s)}$ is non-degenerate (a lattice, by Lemma~\ref{2KGlattice}) and from Corollary~\ref{H(P)hypertopes}. Moreover, in Section 7B \cite{ARP}, it is given that the halving operation on regular polytope of type $\{4,k\}$ results in a regular polytope of type $\{k,k\}$. Let us write the relations that are not of the infinite Coxeter group $[2p,2p]$.
 
Firstly,
  \begin{equation*}
  \begin{split}
   id & = (\rho_0\rho_1(\rho_2\rho_1)^{j-1})^{4}\\ 
      & = (\rho_0\rho_1(\rho_2\rho_1)^{j-1}\rho_0\rho_1(\rho_2\rho_1)^{j-1})^2 \\
      & = (\rho_0\rho_1(\rho_0\rho_2\rho_0\rho_1)^{j-1}\rho_0\rho_1(\rho_2\rho_1)^{j-1})^2 \\
      & = ((\rho_0\rho_1\rho_0\rho_2)^{j-1}\rho_0\rho_1\rho_0\rho_1(\rho_2\rho_1)^{j-1})^2\\
      & = ((\tilde{\rho_0}\rho_2)^{j-1}\tilde{\rho_0}\rho_1(\rho_2\rho_1)^{j-1})^2.
  \end{split} 
  \end{equation*}
For the relation $(\rho_0\rho_1(\rho_2\rho_1)^{p-1})^{2s} = id$, similar arguments give 
$$ id = (\rho_0\rho_1(\rho_2\rho_1)^{p-1})^{2s} = ((\tilde{\rho_0}\rho_2)^{p-1}\tilde{\rho_0}\rho_1(\rho_2\rho_1)^{p-1})^s.$$  
\end{proof}

Notice that, if $p=2$, the regular hypertope obtained from the halving of $\{4,4\}_{(2s,0)}$ is the regular map $\{4,4\}_{(s,s)}$.

\subsection{The polytope ${2 }^{\{3^{n-2},4\},\mathcal{G}(s)}$ and hypertope $\mathcal{H}({2}^{\{3^{n-2},4\},\mathcal{G}(s)})$}\label{northoplex-ext}

Let $\mathcal{P}$ be the $n$-orthoplex, with $n\geq 3$.
From Corollary 8C6 of \cite{ARP}, we have the following result.

\begin{corollary}\label{cor:ncubic_tess}\cite[Corollary 8C6]{ARP}
 Let $n\geq 3$ and $2\leq s < \infty$. The polytope ${2 }^{\{3^{n-2},4\},\mathcal{G}(s)}$ is the cubical regular $(n+1)$-toroid $\{4,3^{n-2},4\}_{(2s,0^{n-1})}$, with group $D_s^n\rtimes [3^{n-2},4]$ of order $(4s)^n n!$.
\end{corollary}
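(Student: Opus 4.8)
The plan is to extract the type, group and order straight from Theorem~\ref{2KG(s)} and then to identify the polytope with the named toroid by comparing the two presentations as quotients of the cubic honeycomb group. First I would apply Theorem~\ref{2KG(s)} with $\mathcal{P}$ the $n$-orthoplex, which has Schl\"{a}fli type $\{3^{n-2},4\}$, automorphism group $[3^{n-2},4]$ of order $2^n n!$, and $2n$ vertices, so that $q=|V(\mathcal{P})|/2=n$; its central symmetry, with $\alpha=(\tau_0\cdots\tau_{n-1})^n$, is recorded in Table~\ref{tab:centralinvo}. Part~(1) of the theorem then gives the type $\{4,3^{n-2},4\}$, and part~(2) gives the group $D_s^n\rtimes[3^{n-2},4]$ together with the order $(2s)^n\cdot 2^n n!=(4s)^n n!$. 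This already settles every assertion except the identification with the toroid.

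For that identification I would use the presentation in the remark after Theorem~\ref{2KG(s)}: since the $n$-orthoplex is the universal polytope of its type (so the relation set $R$ is empty), $G(2^{\{3^{n-2},4\},\mathcal{G}(s)})$ is the honeycomb Coxeter group $C:=[4,3^{n-2},4]$ factored by the extra relations $(\rho_0\alpha\rho_0\alpha)^s=id$ and $(\rho_0\tau^{-1}\rho_0\tau)^2=id$. The preliminary combinatorial observation is that the orthoplex has exactly two diagonal classes, the antipodal pairs $\{e_i,-e_i\}$ and the edges $\{e_i,\pm e_j\}$ with $i\neq j$, because any two non-antipodal vertices are equivalent under the signed-permutation group; hence $\mathcal{G}(s)$, the matching with label $s$, records only the antipodal class and the two displayed families are all the extra relations.

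The heart of the argument is a Euclidean reading of these relations inside $C$. Realizing $\langle\rho_1,\ldots,\rho_n\rangle$ as the point group $[3^{n-2},4]$ fixing the origin and $\rho_0$ as the affine reflection in the hyperplane $\{x_1=\tfrac{1}{2}\}$, the central involution $\alpha$ becomes the point reflection $x\mapsto -x$, so $\alpha\rho_0\alpha$ is the reflection in $\{x_1=-\tfrac{1}{2}\}$ and $\rho_0\alpha\rho_0\alpha$ is the translation by $2e_1$, that is $t_1^2$ where $t_1$ is a primitive translation in the translation subgroup $\mathbb{Z}^n$ of $C$. Thus $(\rho_0\alpha\rho_0\alpha)^s=t_1^{2s}$; the identity $r_i=t_i^2$ between the rotation generating the $i$-th $D_s$ factor and the square of a primitive translation is exactly what reconciles the decomposition $D_s^n\rtimes[3^{n-2},4]$ with the toroidal decomposition $(\mathbb{Z}_{2s})^n\rtimes[3^{n-2},4]$. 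The companion relations $(\rho_0\tau^{-1}\rho_0\tau)^2=id$ merely assert that reflections in the perpendicular walls $\{x_1=\tfrac{1}{2}\}$ and $\{x_2=\tfrac{1}{2}\}$ commute, which already holds in $C$ by faithfulness of its geometric representation.

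To conclude, the redundancy of the edge relations leaves $(\rho_0\alpha\rho_0\alpha)^s=t_1^{2s}=id$ as the only effective extra relation, and this is precisely the normal closure that defines the cubic toroid $\{4,3^{n-2},4\}_{(2s,0^{n-1})}$ from $C$ (see \cite[Chapter 6]{ARP}); the two string C-groups therefore coincide, with the common order $(4s)^n n!$ confirmed by Theorem~\ref{2KG(s)}, and non-degeneracy inherited from the orthoplex via Lemma~\ref{2KGlattice}. The step I expect to be the main obstacle is pinning down $\rho_0\alpha\rho_0\alpha$ as the \emph{square} of a primitive lattice translation rather than a primitive translation itself, since this is what produces the period $2s$; it requires fixing the Euclidean realization of $C$ and the normalization of the affine wall, whereas everything else is either a direct appeal to Theorem~\ref{2KG(s)} or an order count.
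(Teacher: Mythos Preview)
The paper does not prove this corollary at all: it is quoted verbatim as \cite[Corollary 8C6]{ARP} and used as input, so there is no argument to compare your proposal against. Your proof is correct and in fact supplies exactly the reasoning that the cited source carries out. The application of Theorem~\ref{2KG(s)} for the type, group structure and order is immediate once you note $|V(\mathcal{P})|=2n$, and your identification of the quotient with the toroid is sound: the $n$-orthoplex has only the edge class and the antipodal class as diagonal classes, the edge relation $(\rho_0\tau_0\rho_0\tau_0)^2=(\rho_0\rho_1)^4=id$ is already a Coxeter relation of $[4,3^{n-2},4]$, and the Euclidean computation showing $\rho_0\alpha\rho_0\alpha$ is twice a primitive lattice translation (hence $(\rho_0\alpha\rho_0\alpha)^s=id$ is exactly the relation cutting out $\{4,3^{n-2},4\}_{(2s,0^{n-1})}$) is the standard one. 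The only point worth tightening is the one you flagged yourself: fixing $\rho_0$ as the reflection in $\{x_1=\tfrac12\}$ pins the primitive lattice to $\mathbb{Z}^n$, whence $\rho_0(\alpha\rho_0\alpha)$ is translation by $2e_1$ rather than $e_1$; once that normalization is stated, the period $2s$ follows and the order check $(4s)^n n!$ closes the argument.
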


The defining relations of the regular polytope $\{4,3^{n-2},4\}_{(2s,0^{n-1})}$ are those given by its Schl\"{a}fli type and the extra relation \cite[\mbox{Section 6D}]{ARP} $$(\rho_0\rho_1\rho_2\ldots\rho_{n-1}\rho_n\rho_{n-1}\ldots\rho_2\rho_1)^{2s} = id.$$

Now, using the halving operation, we get a family of proper regular toroidal hypertopes, as shown in the following result.

\begin{proposition}\label{tildeB3}
 Let $n\geq 3$, $2\leq s < \infty$. The incidence system $$\mathcal{H}({2 }^{\{3^{n-2},4\},\mathcal{G}(s)}) = \Gamma(H({2 }^{\{3^{n-2},4\},\mathcal{G}(s)}), (H_i)_{i\in \{0,\ldots,n\}}),$$ where $H({2 }^{\{3^{n-2},4\},\mathcal{G}(s)}):=\langle \rho_0\rho_1\rho_0, \rho_1, \ldots, \rho_n\rangle = \langle \tilde{\rho_0}, \rho_1, \ldots, \rho_n\rangle $, is a regular hypertope whose automorphism group, of size $(4s)^{n-1}(2s)n!$, is the quotient of the Coxeter group with diagram 
 $$\xymatrix@-1.5pc{*{\bullet}\ar@{-}[rrdd]^(0.01){\rho_1} \\
\\
 &&*{\bullet}\ar@{-}[rr]_(0.05){\rho_2}&&*{\bullet}\ar@{--}[rrrr]_(0.01){\rho_3}&&&&*{\bullet}\ar@{-}[rr]_(0.01){\rho_{n-2}}&&*{\bullet}\ar@{-}[rr]^4_(0.01){\rho_{n-1}}_(0.99){\rho_n}&&*{\bullet}\\
 \\
 *{\bullet}\ar@{-}[rruu]_(0.01){\tilde{\rho_0}}\\
 } $$
 factorized by $$(\tilde{\rho_0}\rho_2\rho_3\ldots\rho_{n-1}\rho_n\rho_{n-1}\ldots\rho_3\rho_2\rho_1)^{2s} = id.$$
\end{proposition}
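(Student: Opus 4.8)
The plan is to reproduce the three-step pattern of the previous propositions: first exhibit the hypertope, then read off its diagram and order, and finally rewrite the single toroidal relation in the halving generators $\tilde{\rho_0},\rho_1,\ldots,\rho_n$. By Corollary~\ref{cor:ncubic_tess}, $2^{\{3^{n-2},4\},\mathcal{G}(s)}$ is the cubic toroid $\{4,3^{n-2},4\}_{(2s,0^{n-1})}$. The $n$-orthoplex is a convex polytope, so its poset is a lattice; by Lemma~\ref{2KGlattice} the poset of $2^{\{3^{n-2},4\},\mathcal{G}(s)}$ is then a lattice, i.e.\ the polytope is non-degenerate, and Corollary~\ref{H(P)hypertopes} shows $\mathcal{H}(2^{\{3^{n-2},4\},\mathcal{G}(s)})$ is a regular hypertope with automorphism group $H(2^{\{3^{n-2},4\},\mathcal{G}(s)})$. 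Its Coxeter diagram is the general halving diagram of Section~\ref{halv} specialised to type $\{4,3^{n-2},4\}$: as $p_1=4$ is even the branch between $\tilde{\rho_0}$ and $\rho_1$ is improper, leaving the displayed $\tilde{B}_n$ diagram. Since $p_1=4$ is even, $H$ has index $2$ in $G$, whence $|H|=|G|/2=(4s)^n n!/2=(4s)^{n-1}(2s)\,n!$, as claimed.

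For the relation, put $A:=\rho_2\cdots\rho_{n-1}\rho_n\rho_{n-1}\cdots\rho_2$ and $t:=\rho_0\rho_1 A\rho_1=\rho_0\rho_1\rho_2\cdots\rho_n\cdots\rho_2\rho_1$, so that $t^{2s}=id$ is the defining relation of the toroid. Here $A$ is an involution commuting with $\rho_0$, because $\rho_0$ commutes with each of $\rho_2,\ldots,\rho_n$. Writing $v:=\tilde{\rho_0}\rho_2\cdots\rho_n\cdots\rho_2\rho_1=\rho_0\rho_1\rho_0\,A\rho_1$ and using $\rho_0 A=A\rho_0$ gives $v=\rho_0\rho_1 A\rho_0\rho_1$, hence $v=t\,(\rho_1\rho_0\rho_1)$. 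On the other hand $(\rho_1\rho_0\rho_1)t=(\rho_1\rho_0\rho_1\rho_0\rho_1)A\rho_1=\tilde{\rho_0}A\rho_1=v$, where $\rho_1\rho_0\rho_1\rho_0\rho_1=\rho_0\rho_1\rho_0$ follows from $(\rho_0\rho_1)^4=id$. Thus $\rho_1\rho_0\rho_1$ commutes with $t$; being an involution, it yields $v^2=(t\,\rho_1\rho_0\rho_1)^2=t^2$, and therefore $v^{2s}=t^{2s}=id$. This is exactly the asserted relation $(\tilde{\rho_0}\rho_2\cdots\rho_n\cdots\rho_2\rho_1)^{2s}=id$, now written purely in the halving generators.

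Collecting the above, $H$ is a C-group with the stated $\tilde{B}_n$ diagram, the relation $v^{2s}=id$ holds in it, and $|H|=(4s)^{n-1}(2s)\,n!$; since $v$ involves only $\tilde{\rho_0},\rho_1,\ldots,\rho_n$, the group $H$ is a quotient of the Coxeter group of that ($\tilde{B}_n$) diagram factorised by $v^{2s}=id$. The main obstacle is to confirm this is the \emph{full} presentation, i.e.\ that $v^{2s}=id$ is the only extra relation, equivalently that the abstractly presented group has order at most $(4s)^{n-1}(2s)\,n!$ and hence equals $H$. I would settle this through the explicit structure $G=D_s^n\rtimes[3^{n-2},4]$ of Theorem~\ref{2KG(s)}: computing $t$ and $v$ inside this semidirect product shows $t^2=v^2=\sigma_{F_0}\sigma_{F_0\alpha}$, an element of order $s$ generating the $D_s$-factor attached to the antipodal pair $\{F_0,F_0\alpha\}$. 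This identifies the translation subgroup collapsed by the relation and matches the order of the presented group with $|H|$, exactly as $t^{2s}=id$ cuts $[4,3^{n-2},4]$ down to the toroid.
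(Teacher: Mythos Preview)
Your argument is correct and follows essentially the same route as the paper: non-degeneracy via Lemma~\ref{2KGlattice}, hypertope via Corollary~\ref{H(P)hypertopes}, diagram and order from Section~\ref{halv}, and a rewriting of the single toroidal relation in the halving generators. Your computation of the relation is a mild variant of the paper's: rather than manipulating the word $(\rho_0\rho_1 A\rho_1)^{2s}$ directly, you observe that $c:=\rho_1\rho_0\rho_1$ commutes with $t$ and is an involution, giving the sharper identity $v^2=t^2$; the paper's step-by-step computation amounts to the same conclusion but without isolating this commuting element.

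One point worth noting: you explicitly flag the question of whether $v^{2s}=id$ together with the $\tilde{B}_n$ Coxeter relations is a \emph{complete} presentation for $H$, and you sketch how to close this via the semidirect product structure. The paper's proof, by contrast, only verifies that the relation holds and does not argue completeness separately; it implicitly leans on the fact that $G$ has a known presentation (Coxeter plus the one toroidal relation) and that the rewriting shows the relation is equivalent in $H$. Your extra care here is appropriate, though your sketch would still need the Reidemeister--Schreier step (or an order comparison) spelled out to be airtight.
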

\begin{proof}
  The incidence system of the halving group $H({2 }^{\{3^{n-2},4\},\mathcal{G}(s)})$ is a regular hypertope since the poset of the polytope $\{3^{n-2},4\}$ is a lattice, making ${2 }^{\{3^{n-2},4\},\mathcal{G}(s)}$ non-degenerate (by Lemma~\ref{2KGlattice}), which is under the conditions of Corollary~\ref{H(P)hypertopes}. The relations of the Coxeter diagram above follow naturally from the definition of the halving operation.
  
  Consider now the extra relation $$(\rho_0\rho_1\rho_2\ldots\rho_{n-1}\rho_n\rho_{n-1}\ldots\rho_2\rho_1)^{2s} = id$$ of $\{4,3^{n-2},4\}_{(2s,0^{n-1})}$.
  Then,
   \begin{equation*}
  \begin{split}
   id & =(\rho_0\rho_1\rho_2\ldots\rho_{n-1}\rho_n\rho_{n-1}\ldots\rho_2\rho_1)^{2s}\\
      & = (\rho_0\rho_1\rho_2\ldots\rho_{n-1}\rho_n\rho_{n-1}\ldots\rho_2\rho_1\rho_0\rho_1\rho_2\ldots\rho_{n-1}\rho_n\rho_{n-1}\ldots\rho_2\rho_1)^s\\
      & = (\rho_0\rho_1\rho_2\ldots\rho_{n-1}\rho_n\rho_{n-1}\ldots\rho_2\rho_0\rho_1\rho_0\rho_1\rho_0\rho_2\ldots\rho_{n-1}\rho_n\rho_{n-1}\ldots\rho_2\rho_1)^s\\
      & =  (\rho_0\rho_1\rho_0\rho_2\ldots\rho_{n-1}\rho_n\rho_{n-1}\ldots\rho_2\rho_1\rho_0\rho_1\rho_0\rho_2\ldots\rho_{n-1}\rho_n\rho_{n-1}\ldots\rho_2\rho_1)^s \\
      & =  (\tilde{\rho_0}\rho_2\ldots\rho_{n-1}\rho_n\rho_{n-1}\ldots\rho_2\rho_1\tilde{\rho_0}\rho_2\ldots\rho_{n-1}\rho_n\rho_{n-1}\ldots\rho_2\rho_1)^s\\
      & = (\tilde{\rho_0}\rho_2\ldots\rho_{n-1}\rho_n\rho_{n-1}\ldots\rho_2\rho_1)^{2s}
  \end{split} 
  \end{equation*}
\end{proof}
 Following the notation of Ens \cite{rank4toroidal} and Weiss and Montero \cite{teroasia2020}, we denote these regular toroidal hypertopes by $\left\{\nfrac{3}{3},3^{n-3},4\right\}_{(2s,0^{n-1})}$.
 
\subsection{The polytope ${2 }^{\{4,3^{n-2}\},\mathcal{G}(s)}$ and hypertope $\mathcal{H}({2}^{\{4,3^{n-2}\},\mathcal{G}(s)})$}\label{ncubehyper}

Consider the $n$-cube with automorphism group $[4,3^{n-2}] = \langle \tau_0,\ldots,\tau_{n-1}\rangle$, with $n\geq 3$.
 
 Let $(v_1,v_2,\ldots,v_n)$ be the coordinates of a vertex of the $n$-cube in an Euclidean space and let $v_i\in\{\pm 1\}$. In addition, let 
 \begin{equation*}
  \begin{split}
   (v_1,v_2,\ldots,v_n)\tau_0 &:= (-v_1,v_2,\ldots,v_n) \\
   (v_1,\ldots, v_{j-1}, v_j, v_{j+1}, v_{j+2},\ldots,v_n)\tau_j &:= (v_1,\ldots, v_{j-1}, v_{j+1},v_j, v_{j+2},\ldots,v_n),\\
  \end{split}
 \end{equation*}
  for $j\in\{1,\ldots,n-1\}$.
 Let $F_0 := (1^n)$ be the vertex having all coordinates equal to 1, and let $\beta := \tau_0\tau_1\tau_2\ldots\tau_{n-2}\tau_{n-1}$. Then,
 $$ F_0 \beta = (1^{n-1},-1)$$
 where  $1^{n-1}$ means we have a row of $+1$'s of size $(n-1)$. Moreover, it is easily seen that
 $$ F_0 \beta^i = (1^{n-i},-1^i)$$
 Particularly, $$F_0\beta^n = (-1^n).$$
 Thus $\beta^n$ is clearly the central involution of the $n$-cube.
 
 \begin{lemma}\label{ncubediagonalclasses}
The $n$-cube has exactly $n$ diagonal classes which can be represented by $\{F_0, F_0\beta^i\}$, for $1\leq i\leq n$, where $F_0$ is a vertex, $\beta = \tau_0\tau_1\tau_2\ldots\tau_{n-2}\tau_{n-1}$, and $F_0\beta^i$ is the vertex of the action of $\beta^i$ on the vertex $F_0$. 
 \end{lemma}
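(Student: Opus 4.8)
The plan is to work entirely with the explicit coordinate action of $G(\mathcal{P}) = [4,3^{n-2}]$ on the $2^n$ vertices of the $n$-cube that was just set up, under which $G(\mathcal{P})$ is realized as the full group of signed permutations of the coordinates ($\tau_0$ reverses a sign, the $\tau_j$ generate all coordinate permutations). First I would attach to each diagonal $\{F_A,F_B\}$ the combinatorial invariant $d(F_A,F_B)$, namely the number of coordinates in which $F_A$ and $F_B$ differ. The key observation is that $d$ is constant on diagonal classes: if $g\in G(\mathcal{P})$ permutes coordinates by $\pi$ and reverses signs on a set $S$, then for each index $k$ the $k$-th coordinates of $F_Ag$ and $F_Bg$ agree exactly when the $\pi^{-1}(k)$-th coordinates of $F_A$ and $F_B$ agree; so $g$ preserves the number of agreeing coordinates and hence $d(F_Ag,F_Bg)=d(F_A,F_B)$. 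This shows $d$ is well defined on diagonal classes and that diagonals with different values of $d$ are inequivalent.

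Next I would record the range of the invariant. For distinct vertices $d$ takes values in $\{1,\ldots,n\}$, with $d=n$ corresponding precisely to antipodal pairs. Since $F_0\beta^i = (1^{n-i},-1^i)$ differs from $F_0 = (1^n)$ in exactly $i$ coordinates, the diagonal $\{F_0,F_0\beta^i\}$ has invariant $d=i$ for each $1\le i\le n$; in particular these $n$ diagonals are pairwise inequivalent and realize every possible value of $d$.

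It then remains to prove that $d$ is a \emph{complete} invariant, i.e.\ that any two diagonals sharing the same value of $d$ are equivalent. By vertex-transitivity of $G(\mathcal{P})$ every diagonal is equivalent to one with an endpoint $F_0$, so it suffices to compare $\{F_0,F_A\}$ and $\{F_0,F_B\}$ with $d(F_0,F_A)=d(F_0,F_B)=i$. Writing $S$ and $T$ for the sets of coordinates on which $F_A$, respectively $F_B$, differ from $F_0$, we have $|S|=|T|=i$, so there is a coordinate permutation $\pi$ with $\pi(S)=T$. The corresponding element of $\langle \tau_1,\ldots,\tau_{n-1}\rangle$ fixes $F_0=(1^n)$, since it involves no sign changes, and carries $F_A$ (the vertex with $-1$ exactly on $S$) to the vertex with $-1$ exactly on $T$, which is $F_B$. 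Combining the three steps, the map $\{F_A,F_B\}\mapsto d(F_A,F_B)$ is a bijection from diagonal classes onto $\{1,\ldots,n\}$, and $\{F_0,F_0\beta^i\}$, $1\le i\le n$, is a complete set of representatives.

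I expect the only delicate point to be the completeness step: one must check that the required coordinate permutation can be realized inside the facet subgroup $\langle \tau_1,\ldots,\tau_{n-1}\rangle$ and that it genuinely fixes $F_0$ while moving the difference set $S$ onto $T$. The invariance half and the enumeration of possible values of $d$ are immediate once the signed-permutation description of the action is in place.
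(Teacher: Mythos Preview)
Your argument is correct and in fact slightly more self-contained than the paper's. Both proofs use what is essentially the same invariant: the paper measures the squared Euclidean distance $\|F_0-F_0\beta^i\|^2=4i$, while you use the Hamming distance $d(F_0,F_0\beta^i)=i$; these differ only by a constant factor, so the ``pairwise inequivalent'' step is identical in substance. The genuine difference is in how the total count of diagonal classes is obtained. The paper does \emph{not} prove completeness of the invariant: after showing the $n$ diagonals $\{F_0,F_0\beta^i\}$ lie in distinct classes, it simply cites \cite[Section 5B]{ARP} for the fact that the $n$-cube has exactly $n$ diagonal classes and concludes by a counting match. You instead establish completeness directly, exhibiting for any two diagonals with the same $d$-value an explicit element of the vertex stabilizer $\langle \tau_1,\ldots,\tau_{n-1}\rangle\cong S_n$ carrying one to the other. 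Your route avoids the external reference and makes the lemma stand on its own; the paper's route is shorter but depends on the cited classification.
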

\begin{proof}
Consider the construction of the vertices of the cube as above and let $F_0 := (1^n)$.
Let $1\leq i,j\leq n$ and consider the vertices $F_0 \beta^i = (1^{n-i},-1^i)$ and  $F_0 \beta^j = (1^{n-j},-1^j)$. Suppose that the diagonals $\{F_0, F_0\beta^i\}$ and $\{ F_0, F_0\beta^j\}$ are in the same diagonal class. Then, they share the same square length as their diagonal class representative
$$ || F_0 - F_0\beta^i||^2 = || F_0 - F_0\beta^j||^2.$$
Hence,
\begin{equation*}
 \begin{split}
  || F_0 - F_0\beta^i||^2 &= || F_0 - F_0\beta^j|| \Leftrightarrow\\
 \Leftrightarrow  || (0^{n-i}, 2^i )|| ^2 &= || (0^{n-j}, 2^j )||^2 \Rightarrow\\
 \Leftrightarrow i &= j .
 \end{split}
\end{equation*}
Since there are $n$ distinct diagonal classes of the $n$-cube \cite[Section 5B]{ARP} and we can represent $n$ distinct diagonal classes as above, we have proven the statement of the lemma.
\end{proof}

 With the above lemma, we are able to give the relations of the group of automorphisms of ${2 }^{\{4,3^{n-2}\},\mathcal{G}(s)}$.
 
 \begin{corollary}\label{cor:ncube_polytope}
  Let $n\geq 3$ and $2\leq s < \infty$. Then ${2}^{\{4,3^{n-2}\},\mathcal{G}(s)}$ is a $(n+1)$-polytope with type $\{4,4,3^{n-2}\}$ and automorphism group $D_s^{2^{n-1}}\rtimes [4, 3^{n-2}]$ of order $(2s)^{2^{n-1}} 2^n n!$ with the relations given by its Coxeter diagram and the following extra relations $$(\rho_0\beta^{-i}\rho_0\beta^i)^2 = id\mbox{ for } 2 \leq i \leq n-1$$$$(\rho_0\beta^n\rho_0\beta^n)^s = id,$$ where $\beta = \rho_1\rho_2\ldots\rho_n$. Moreover, its toroidal residue is the map $\{4,4\}_{(4,0)}$.
 \end{corollary}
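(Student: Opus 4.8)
The plan is to read the first three assertions straight off Theorem~\ref{2KG(s)}, to obtain the presentation from the general recipe recorded immediately after that theorem combined with Lemma~\ref{ncubediagonalclasses}, and to pin down the toroidal residue by a single order computation inside the semidirect product $W\rtimes G(\mathcal{P})$.

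First I would instantiate Theorem~\ref{2KG(s)} with $\mathcal{P}$ the $n$-cube $\{4,3^{n-2}\}$, for which $|V(\mathcal{P})|=2^n$ (so $q=2^{n-1}$) and $|G(\mathcal{P})|=|[4,3^{n-2}]|=2^n n!$. Parts (1) and (2) of that theorem then give at once that $2^{\{4,3^{n-2}\},\mathcal{G}(s)}$ is an $(n+1)$-polytope of type $\{4,4,3^{n-2}\}$ with group $D_s^{2^{n-1}}\rtimes[4,3^{n-2}]$ of order $(2s)^{2^{n-1}}2^n n!$, while Lemma~\ref{2KGlattice} guarantees it is a genuine (non-degenerate) polytope because the $n$-cube is a lattice.

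Next I would establish the presentation. The $n$-cube is the universal polytope of its type, i.e.\ the full Coxeter group $[4,3^{n-2}]$ with empty extra-relation set, so the recipe displayed after Theorem~\ref{2KG(s)} says that $G(2^{\{4,3^{n-2}\},\mathcal{G}(s)})$ is the Coxeter group $[4,4,3^{n-2}]$ quotiented by the antipodal relation $(\rho_0\alpha\rho_0\alpha)^s=id$ together with one relation $(\rho_0\tau^{-1}\rho_0\tau)^2=id$ per non-antipodal diagonal class, $\tau$ being any class representative. By Lemma~\ref{ncubediagonalclasses} the classes are exactly $\{F_0,F_0\beta^i\}$ for $1\le i\le n$, with the antipodal one at $i=n$ and $\alpha=\beta^n$; choosing $\tau=\beta^i$ yields $(\rho_0\beta^{-i}\rho_0\beta^i)^2=id$ and $(\rho_0\beta^n\rho_0\beta^n)^s=id$. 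The only point to explain is the absence of $i=1$: the edge class is equally represented by $\tau=\rho_1$ (indeed $F_0\rho_1=(-1,1^{n-1})$ is adjacent to $F_0$), and $(\rho_0\rho_1^{-1}\rho_0\rho_1)^2=(\rho_0\rho_1)^4=id$ is already a defining relation of $[4,4,3^{n-2}]$, so it contributes nothing new and the extra relations reduce to the stated ones for $2\le i\le n-1$ and $i=n$.

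Finally I would identify the toroidal residue, namely the rank-$3$ section $\langle\rho_0,\rho_1,\rho_2\rangle$, which is of type $\{4,4\}$; the map is determined by the order of $\rho_0\rho_1\rho_2\rho_1$. With $\rho_0=\sigma_{F_0}$, $\rho_1=\tau_0$, $\rho_2=\tau_1$ and $g:=\tau_0\tau_1\tau_0$ (an involution), the conjugation rule \eqref{eq:sigmaFtoF0} gives $(\rho_0\rho_1\rho_2\rho_1)^2=\sigma_{F_0}\,g\,\sigma_{F_0}\,g=\sigma_{F_0}\sigma_{F_0g}$, and a short coordinate computation gives $F_0g=(-1,-1,1^{n-2})$. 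For $n\ge 3$ this is not the antipode $(-1^n)$ of $F_0$, so $\sigma_{F_0}$ and $\sigma_{F_0g}$ lie in different $D_s$-factors of $W$ and are therefore distinct commuting involutions; thus $(\rho_0\rho_1\rho_2\rho_1)^2$ has order $2$ and $\rho_0\rho_1\rho_2\rho_1$ has order exactly $4$, the defining relation of $\{4,4\}_{(4,0)}$. As a consistency check, the same computation for the square ($n=2$) gives instead $F_0g=(-1,-1)=F_0\alpha$, so $\sigma_{F_0}\sigma_{F_0g}$ has order $s$ and one recovers $\{4,4\}_{(2s,0)}$ from Corollary~\ref{cor:2p_polytope}. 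I expect the subtlest step to be this last identification: one must rule out any further collapse of the $\{4,4\}$-section—equivalently, check that $\rho_0\rho_1\rho_2\rho_1$ and its rotate generate exactly the $\mathbb{Z}_4\times\mathbb{Z}_4$ translation lattice of $\{4,4\}_{(4,0)}$—which I would confirm from the known order of $\{4,4\}_{(4,0)}$ or from the standard theory of regular toroidal maps.
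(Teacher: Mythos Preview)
Your proposal is correct and tracks the paper's argument closely: invoke Theorem~\ref{2KG(s)} and Lemma~\ref{ncubediagonalclasses}, then observe that the $i=1$ diagonal relation collapses to the Coxeter relation $(\rho_0\rho_1)^4=id$. The one point of divergence is the toroidal residue: the paper obtains $(\rho_0\rho_1\rho_2\rho_1)^4=id$ by simplifying the $i=2$ relation $(\rho_0\beta^{-2}\rho_0\beta^2)^2=id$, whereas you compute $(\rho_0\rho_1\rho_2\rho_1)^2=\sigma_{F_0}\sigma_{F_0g}$ directly inside $W\rtimes G(\mathcal{P})$ and read off that it is a product of two commuting non-antipodal involutions. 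Your route has the bonus of giving the exact order~$4$ rather than just an upper bound, and you are right to flag that ruling out a proper quotient of $\{4,4\}_{(4,0)}$ still needs a short order count---a point the paper leaves implicit.
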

 \begin{proof}
  The polytopes above are obtained by Theorem~\ref{2KG(s)} and, when $i=n$, $\beta^n$ is the central involution of the polytope $\{4,3^{n-2}\}$, meaning that 
  $$(\rho_0\beta^n\rho_0\beta^n)^s = id.$$
  The remaining extra relations of the statement of this corollary come from diagonal classes of improper branches of the diagram $\mathcal{G}(s)$.
  Particularly, when $i=1$, we have
  $$id = (\rho_0 \beta^{-1}\rho_0\beta)^2  = (\rho_0 \rho_{n-1}\ldots\rho_2\rho_1 \rho_0 \rho_1 \rho_2 \ldots \rho_n)^2 $$
  which implies that 
  $$ id =  (\rho_0 \rho_1 \rho_0 \rho_1)^2 = (\rho_0\rho_1)^4,$$
a relation given by the type of the polytope. Hence, $(\rho_0\beta^n\rho_0\beta^n)^s = id$ and $(\rho_0\beta^{-i}\rho_0\beta^i)^2 = id$, for $2 \leq i \leq n-1$, are the only extra relations needed to define the automorphism group of ${2}^{\{4,3^{n-2}\},\mathcal{G}(s)}$.
  
  To prove that the toroidal residue is the map $\{4,4\}_{(4,0)}$, observe that from 
  $id = (\rho_0\beta^{-2}\rho_0\beta^2)^2$ we have 
  $$ id = (\rho_0 \rho_1 \rho_{n-1}\ldots\rho_2\rho_1 \rho_0 \rho_1 \rho_2 \ldots \rho_n \rho_1 )^2$$
  which implies
  $$id = (\rho_0 \rho_1 \rho_2\rho_1 \rho_0 \rho_1 \rho_2 \rho_1 )^2 = (\rho_0\rho_1\rho_2\rho_1)^4,$$
  showing that the toroidal residue is the map $\{4,4\}_{(4,0)}$.
 \end{proof}

 If $n=3$, the resulting abstract polytopes are quotients of the locally toroidal polytope of type $\{4,4,3\}$, satisfying the following relations
 $$(\rho_0\rho_1\rho_2\rho_1)^4 = (\rho_0(\rho_1\rho_2\rho_3)^3)^{2s} = id,$$
 which do not give an universal locally toroidal polytope. Therefore, these polytopes do not appear in \cite{ARP}.

 Let us construct the regular hypertopes corresponding to this family. As before, we will use the halving operation.
 
 \begin{proposition}\label{pp:hyper_ncube}
  Let $n\geq 3$ and $2\leq s < \infty$. The incidence system $$\mathcal{H}({2 }^{\{4,3^{n-2}\},\mathcal{G}(s)}) = \Gamma(H({2 }^{\{4,3^{n-2}\},\mathcal{G}(s)}), (H_i)_{i\in \{0,\ldots,n\}}),$$ where $H({2 }^{\{4,3^{n-2}\},\mathcal{G}(s)}):=\langle \rho_0\rho_1\rho_0, \rho_1, \ldots, \rho_n\rangle = \langle \tilde{\rho_0}, \rho_1, \ldots, \rho_n\rangle $, is a regular hypertope and its automorphism group, of size $(2s)^{2^{n-1}} 2^{n-1} n!$, has the relations given by its Coxeter diagram
$$\xymatrix@-1.5pc{*{\bullet}\ar@{-}[rrdd]^(0.01){\rho_1}^4 \\
\\
 &&*{\bullet}\ar@{-}[rr]_(0.05){\rho_2}&&*{\bullet}\ar@{--}[rrrr]_(0.01){\rho_3}&&&&*{\bullet}\ar@{-}[rr]_(0.01){\rho_{n-2}}&&*{\bullet}\ar@{-}[rr]_(0.01){\rho_{n-1}}_(0.99){\rho_n}&&*{\bullet}\\
 \\
 *{\bullet}\ar@{-}[rruu]_(0.01){\tilde{\rho_0}}_ 4\\
 } $$
 and the extra relations $(\tilde{\beta}^{-i}\beta^i)^2 = id$, for $2 \leq i \leq n-1$, and $(\tilde{\beta}^n\beta^n)^s = id$, where $\beta = \rho_1\rho_2\ldots\rho_n$ and $\tilde{\beta}= \tilde{\rho_0}\rho_2\ldots\rho_n$. 
Moreover the toroidal residue is the map $\{4,4\}_{(2,2)}$.
 \end{proposition}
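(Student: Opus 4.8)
The plan is to follow the template of Proposition~\ref{tildeB3} and of the $2p$-gon case in Section~\ref{ss:2p}: first establish that $\mathcal{H}({2}^{\{4,3^{n-2}\},\mathcal{G}(s)})$ is a regular hypertope, then compute its order, read off the Coxeter diagram, and finally translate the defining relations of $G({2}^{\{4,3^{n-2}\},\mathcal{G}(s)})$ into the halving group. The $n$-cube $\{4,3^{n-2}\}$ is convex, so its poset is a lattice; by Lemma~\ref{2KGlattice} the extended polytope ${2}^{\{4,3^{n-2}\},\mathcal{G}(s)}$ is therefore non-degenerate, and Corollary~\ref{H(P)hypertopes} then guarantees that the coset incidence system of the halving group $H({2}^{\{4,3^{n-2}\},\mathcal{G}(s)}) = \langle\tilde{\rho_0},\rho_1,\ldots,\rho_n\rangle$ is a regular hypertope with type-preserving automorphism group equal to this halving group. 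Since ${2}^{\{4,3^{n-2}\},\mathcal{G}(s)}$ has type $\{4,4,3^{n-2}\}$, its first Schl\"afli entry is $4$, which is even, so by the remark in Section~\ref{halv} the halving group has index $2$ in $G({2}^{\{4,3^{n-2}\},\mathcal{G}(s)})$; dividing the order $(2s)^{2^{n-1}}2^n n!$ from Corollary~\ref{cor:ncube_polytope} by $2$ yields $(2s)^{2^{n-1}}2^{n-1}n!$. The Coxeter diagram is the one prescribed by the halving operation in Section~\ref{halv} for a polytope of type $\{4,4,3^{n-2}\}$: both $\tilde{\rho_0}$ and $\rho_1$ attach to $\rho_2$ with label $p_2=4$, the tail $\rho_2,\ldots,\rho_n$ stays linear with labels $3$, and the branch joining $\tilde{\rho_0}$ to $\rho_1$ carries the improper label $p_1/2=2$.

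The key observation for translating the extra relations is the conjugation identity $\tilde\beta = \rho_0\beta\rho_0$. Indeed, in a string C-group of type $\{4,4,3^{n-2}\}$ the generator $\rho_0$ commutes with $\rho_2,\ldots,\rho_n$, so
\begin{equation*}
\tilde\beta = \tilde{\rho_0}\rho_2\cdots\rho_n = \rho_0\rho_1\rho_0\rho_2\cdots\rho_n = \rho_0\rho_1\rho_2\cdots\rho_n\rho_0 = \rho_0\beta\rho_0 .
\end{equation*}
Telescoping gives $\tilde\beta^i = \rho_0\beta^i\rho_0$ and $\tilde\beta^{-i} = \rho_0\beta^{-i}\rho_0$ for every $i$, whence
\begin{equation*}
(\tilde\beta^{-i}\beta^i)^2 = (\rho_0\beta^{-i}\rho_0\beta^i)^2 \qquad\text{and}\qquad (\tilde\beta^n\beta^n)^s = (\rho_0\beta^n\rho_0\beta^n)^s .
\end{equation*}
Thus the listed relations of the hypertope are, term-for-term, the extra relations of $G({2}^{\{4,3^{n-2}\},\mathcal{G}(s)})$ recorded in Corollary~\ref{cor:ncube_polytope}. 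Together with the diagram relations this exhibits the claimed relations; that no further relations are needed is inherited, exactly as in Proposition~\ref{tildeB3}, from the general description of the halving group combined with the index-$2$ order already computed.

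Finally, the toroidal residue is the rank-$3$ residue generated by $\{\tilde{\rho_0},\rho_1,\rho_2\}$. Because the branch between $\tilde{\rho_0}$ and $\rho_1$ is improper, after reordering as $\tilde{\rho_0},\rho_2,\rho_1$ this residue is a string C-group of type $\{4,4\}$. Its group $\langle\tilde{\rho_0},\rho_1,\rho_2\rangle = \langle\rho_0\rho_1\rho_0,\rho_1,\rho_2\rangle$ is precisely the halving group of the facet $\langle\rho_0,\rho_1,\rho_2\rangle$ of ${2}^{\{4,3^{n-2}\},\mathcal{G}(s)}$, which by the proof of Corollary~\ref{cor:ncube_polytope} is the toroidal map $\{4,4\}_{(4,0)}$. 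By the halving computation of Section~\ref{ss:2p}, where the halving of $\{4,4\}_{(2s',0)}$ is $\{4,4\}_{(s',s')}$, taking $2s'=4$ (that is, $s'=2$) gives $\{4,4\}_{(2,2)}$. As a consistency check one may instead specialize $(\tilde\beta^{-2}\beta^2)^2 = (\rho_0\beta^{-2}\rho_0\beta^2)^2 = id$ inside this residue and recover, as in Corollary~\ref{cor:ncube_polytope}, the defining relation $(\rho_0\rho_1\rho_2\rho_1)^4 = id$ of $\{4,4\}_{(4,0)}$, which halves to $\{4,4\}_{(2,2)}$. I expect this last step to be the main obstacle: one must correctly identify which rank-$3$ residue is the toroidal one and verify that halving sends its parameters $(4,0)$ to $(2,2)$ rather than merely preserving the Schl\"afli symbol; once the identity $\tilde\beta=\rho_0\beta\rho_0$ is in hand, the relation translation and the order count are routine.
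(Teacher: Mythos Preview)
Your proof is correct and follows the paper's overall strategy: non-degeneracy via Lemma~\ref{2KGlattice}, regular hypertope via Corollary~\ref{H(P)hypertopes}, the index-$2$ order count, and translation of the extra relations. Your observation $\tilde\beta = \rho_0\beta\rho_0$ compresses the paper's step-by-step commutation of $\rho_0$ past $\rho_2,\ldots,\rho_n$ into one line, but the relation-translation argument is identical in substance.

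The one genuine difference is in identifying the toroidal residue. The paper works directly inside $\langle\tilde{\rho_0},\rho_1,\rho_2\rangle$ and computes the orders of the two translations $u = \tilde{\rho_0}\rho_2\rho_1\rho_2$ and $g = (\tilde{\rho_0}\rho_2\rho_1)^2$, rewriting them as $u=(\rho_0\rho_1\rho_2)^2$ and $g=\rho_0\rho_1\rho_2\rho_1\rho_0\rho_1\rho_2\rho_1$ (a conjugate of $\rho_0\beta^{-2}\rho_0\beta^2$) to conclude $o(u)=4$, $o(g)=2$, hence $\{4,4\}_{(2,2)}$. You instead observe that $\langle\tilde{\rho_0},\rho_1,\rho_2\rangle$ is exactly the halving group of the rank-$3$ section $\langle\rho_0,\rho_1,\rho_2\rangle\cong\{4,4\}_{(4,0)}$ and invoke the remark at the end of Section~\ref{ss:2p} that halving $\{4,4\}_{(2s',0)}$ yields $\{4,4\}_{(s',s')}$. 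Your route is shorter and reuses established material; the paper's is self-contained and makes explicit which translations pin down the parameters. One terminological slip: $\langle\rho_0,\rho_1,\rho_2\rangle$ is not a \emph{facet} of the $(n+1)$-polytope but the group of a $3$-face (rank-$3$ section).
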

\begin{proof}Let $n\geq 3$ and $2\leq s < \infty$. The incidence system of the halving group $H({2 }^{\{4,3^{n-2}\},\mathcal{G}(s)})$ is a regular hypertope by Corollary~\ref{H(P)hypertopes} since the poset of the polytope $\{4,3^{n-2}\}$ is a lattice, making ${2 }^{\{4,3^{n-2}\},\mathcal{G}(s)}$ non-degenerate, by Lemma~\ref{2KGlattice}.
 
 Then, if we denote $\beta = \rho_1\rho_2\ldots\rho_n$ and $\tilde{\beta}= \tilde{\rho_0}\rho_2\ldots\rho_n$, we have
 \begin{equation*}
   \begin{split}
    id & = (\rho_0\beta^{-i}\rho_0\beta^i)^k \\
        & = (\rho_0 (\rho_{n-1}\ldots\rho_2\rho_1)^i \rho_0 (\rho_1 \rho_2 \ldots \rho_n)^i)^k \\
        & = ((\rho_{n-1}\ldots\rho_2\rho_0\rho_1\rho_0)^i  (\rho_1 \rho_2 \ldots \rho_n)^i )^k\\
        & = ( (\rho_{n-1}\ldots\rho_2\tilde{\rho_0})^i (\rho_1 \rho_2 \ldots \rho_n^i )^k\\
        & = (\tilde{\beta}^{-i}\beta^i)^k,
   \end{split}
  \end{equation*}
 where $k=2$ if $2\leq i \leq n-1$, and $k=s$ if $i=n$. Moreover, we have that $\tilde{\beta}^n = \tilde{\beta}^{-n}$, meaning that $$(\tilde{\beta}^{-n}\beta^n)^s = (\tilde{\beta}^{n}\beta^n)^s.$$
  
 Let us prove that the toroidal residue is the map $\{4,4\}_{(2,2)}$.
 Consider the translations $u := \tilde{\rho_0}\rho_2\rho_1\rho_2$ and $g := (\tilde{\rho_0}\rho_2\rho_1)^2$ of the toroidal map residue $\{4,4\}$ of the above hypertope. Then, we have that 
 $$ u =  \tilde{\rho_0}\rho_2\rho_1\rho_2 = \rho_0\rho_1\rho_0\rho_2\rho_1\rho_2 = \rho_0\rho_1\rho_2\rho_0\rho_1\rho_2 = (\rho_0\rho_1\rho_2)^2,$$
 which is a translation of order 4 of the toroidal residue $\{4,4\}_{(4,0)}$ of ${2 }^{\{4,3^{n-2}\},\mathcal{G}(s)}$.
 Furthermore, we have that 
 $$g = (\tilde{\rho_0}\rho_2\rho_1)^2 = (\rho_0\rho_1\rho_0\rho_2\rho_1)^2 = \rho_0\rho_1\rho_2\rho_0\rho_1\rho_0\rho_1\rho_0\rho_2\rho_1 = \rho_0\rho_1\rho_2\rho_1\rho_0\rho_1\rho_2\rho_1,$$
 which is a conjugate of $\rho_0\beta^{-2}\rho_0\beta^2$, meaning $o(g) = 2$. Since $o(u)=4$ and $o(g)=2$, then the toroidal residue of our regular hypertope is the map $\{4,4\}_{(2,2)}$.
 
\end{proof}

  Particularly, when $n=3$,  the regular hypertope of type $\left\{\nfrac{4}{4},3 \right\}$ given by Proposition~\ref{pp:hyper_ncube} is locally toroidal, with toroidal residue $\{4,4\}_{(2,2)}$, and satisfies the relation $((\tilde{\rho_0}\rho_2\rho_3)^3(\rho_1\rho_2\rho_3)^3)^s=id$.

\section{Polytopes ${2}^{\mathcal{P},\mathcal{G}(s)}$ and Hypertopes $\mathcal{H}({2}^{\mathcal{P},\mathcal{G}(s)})$ when $\mathcal{P}$ has rank $3$ or $4$}\label{rank3&4}

In this section we consider that $\mathcal{P}$ is one of the remaining regular polytopes of Table~\ref{tab:centralinvo}: the icosahedron, the dodecahedron, the 24-cell, the 600-cell and the 120-cell. In what follow, similarly to the previous section, we construct extensions of these polytopes and then we apply the halving operation to obtain regular hypertopes. 
In \cite{teroasia2020}, two locally spherical regular hypertopes of hyperbolic type are given: $\left\{\nfrac{3}{3},5\right\}$, with automorphism group of order $60 \cdot 2^{12}$,  and $\left\{\nfrac{3}{3},3,5\right\}$, with automophism group of order $7200 \times 2^{120}$. These two hypertopes will correspond to our hypertopes $\mathcal{H}({2}^{\{3,5\},\mathcal{G}(2)})$ and $\mathcal{H}({2}^{\{3,3,5\},\mathcal{G}(2)})$, respectively. Here, we will give an infinite family of these hypertopes. In addition, we will give a family of hypertopes of type $\left\{\nfrac{3}{3},4,3\right\}$ with toroidal residue $\left\{\nfrac{3}{3},4\right\}_{(4,0,0)}$. 
Most proofs of the following results will be omitted as they follow the same ideas present in the proofs of Corollary~\ref{cor:ncube_polytope} and Proposition~\ref{pp:hyper_ncube}.

\subsection{The polytope ${2 }^{\{3,5\},\mathcal{G}(s)}$ and hypertope $\mathcal{H}({2}^{\{3,5\},\mathcal{G}(s)})$}\label{ss:35}

Let $\mathcal{P}$ be the icosahedron with automorphism group $G := \langle \tau_0, \tau_1, \tau_2\rangle$. The icosahedron has three distinct diagonal classes, which can be determined computationally with GAP\cite{GAP}: $\{F_0, F_0\beta\}$, $\{F_0, F_0\beta^3\}$ and $\{F_0, F_0\beta^5\}$, where $\beta := \tau_0\tau_1\tau_2$. The vertices of the latter diagonal class are antipodal. In fact, the diagonals $\{F_0, F_0\beta\}$ and $\{F_0, F_0\beta^2\}$ are in the same diagonal class, since the double $G_0$-cosets coincide, that is,
\begin{equation*}
\begin{split}
 G_0 \beta^2G_0 &= G_0 (\tau_0\tau_1\tau_2)^2G_0 = G_0 \tau_0\tau_1\tau_2\tau_0G_0 =\\ 
 &= G_0 \tau_0\tau_1\tau_0\tau_2G_0 = G_0 \tau_1\tau_0\tau_1\tau_2G_0 = G_0 \beta G_0.
 \end{split}
\end{equation*}
The same can be proven for the diagonals $\{F_0, F_0\beta^3\}$ and $\{F_0, F_0\beta^4\}$.
With this, we can provide the polytope ${2 }^{\{3,5\},\mathcal{G}(s)}$ and the hypertope $\mathcal{H}({2 }^{\{3,5\},\mathcal{G}(s)})$.

\begin{corollary}\label{cor:435}
 Let $2\leq s < \infty$. Then ${2}^{\{3,5\},\mathcal{G}(s)}$ is a $4$-polytope of type $\{4,3,5\}$ with automophism group $D_s^{6}\rtimes [3,5]$ of order $ 120 \cdot (2s)^{6} $. Moreover, the group $G({2}^{\{3,5\},\mathcal{G}(s)}) := \langle \rho_0, \rho_1, \rho_2, \rho_3\rangle $ is the quotient of the Coxeter group $[4,3,5]$ by the relations $(\rho_0\beta^{-3}\rho_0\beta^3)^2 = id$ and $(\rho_0\beta^5\rho_0\beta^5)^s = id$, where $\beta = \rho_1\rho_2\rho_3$.
\end{corollary}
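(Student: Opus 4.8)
The plan is to obtain the structure of $2^{\{3,5\},\mathcal{G}(s)}$ directly from Theorem~\ref{2KG(s)} and then read off its presentation from the remark following that theorem, using the diagonal class analysis carried out immediately above. First I would invoke Theorem~\ref{2KG(s)} with $\mathcal{P}$ the icosahedron $\{3,5\}$, which is centrally symmetric by Table~\ref{tab:centralinvo} with central involution $\alpha = (\tau_0\tau_1\tau_2)^5 = \beta^5$ and $|V(\mathcal{P})| = 12$. Part (1) then gives that $2^{\{3,5\},\mathcal{G}(s)}$ is a $4$-polytope of type $\{4,3,5\}$, and part (2) gives the group $D_s^q \rtimes [3,5]$ with $q = |V(\mathcal{P})|/2 = 6$ and order $(2s)^6\cdot|[3,5]| = (2s)^6\cdot 120$. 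This settles everything in the statement except the explicit presentation.

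For the presentation I would apply the remark after Theorem~\ref{2KG(s)}: $G(2^{\{3,5\},\mathcal{G}(s)})$ is the Coxeter group $[4,3,5]$ factored by the relation $(\rho_0\alpha\rho_0\alpha)^s = id$ coming from the antipodal diagonal class, together with one relation $(\rho_0\tau^{-1}\rho_0\tau)^2 = id$ for each remaining diagonal class represented by $\{F_0,F_0\tau\}$. The analysis preceding the corollary (using the double coset criterion \eqref{eq:doublecosetdiagonal} and confirmed in GAP) shows the icosahedron has exactly three diagonal classes, represented by $\{F_0,F_0\beta\}$, $\{F_0,F_0\beta^3\}$ and the antipodal class $\{F_0,F_0\beta^5\}$, where $\beta = \rho_1\rho_2\rho_3 = \tau_0\tau_1\tau_2$. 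Since $\alpha=\beta^5$ is an involution we have $\beta^{10}=id$, hence $\beta^{-5}=\beta^5$ and the antipodal relation reads $(\rho_0\beta^5\rho_0\beta^5)^s = id$; the class $\{F_0,F_0\beta^3\}$ yields $(\rho_0\beta^{-3}\rho_0\beta^3)^2 = id$. These are precisely the two relations in the statement.

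The one point needing care, and the only genuine computation, is the class $\{F_0,F_0\beta\}$, which a priori contributes a relation $(\rho_0\beta^{-1}\rho_0\beta)^2 = id$ not listed in the statement. I would show it is redundant, exactly as in the proof of Corollary~\ref{cor:ncube_polytope}: since $\rho_0$ commutes with $\rho_2$ and $\rho_3$ in $[4,3,5]$, conjugation collapses $(\rho_0\beta^{-1}\rho_0\beta)^2$ to a conjugate of $(\rho_0\rho_1)^4$, which is already trivial by the Schl\"{a}fli type. Thus the two displayed relations are the only extra ones required.

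I expect the main obstacle to be justifying the diagonal class count and identifying the representatives as powers of $\beta$, but this is already established in the discussion above the corollary, so the remaining work reduces to the short redundancy check for $\{F_0,F_0\beta\}$; everything else is a direct specialization of Theorem~\ref{2KG(s)}.
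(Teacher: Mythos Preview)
Your proposal is correct and follows exactly the approach the paper intends: the paper omits the proof, stating it follows the same ideas as Corollary~\ref{cor:ncube_polytope}, and your argument---invoking Theorem~\ref{2KG(s)} for the structure and order, reading off the extra relations from the diagonal classes listed just above the corollary, and disposing of the $\{F_0,F_0\beta\}$ relation by collapsing it to $(\rho_0\rho_1)^4$---is precisely that template specialized to the icosahedron.
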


\begin{proposition}
 Let $2\leq s < \infty$. The incidence system $$\mathcal{H}({2 }^{\{3,5\},\mathcal{G}(s)}) = \Gamma(H({2 }^{\{3,5\},\mathcal{G}(s)}), (H_i)_{i\in \{0,\ldots,3\}}),$$ where $H({2 }^{\{3,5\},\mathcal{G}(s)}):=\langle \rho_0\rho_1\rho_0, \rho_1, \rho_2, \rho_3\rangle = \langle \tilde{\rho_0}, \rho_1, \rho_2, \rho_3\rangle$, is a regular hypertope and its automorphism group, of size $ 60 \cdot (2s)^{6} $, is the quotient of the Coxeter group with diagram
$$\xymatrix@-1.5pc{*{\bullet}\ar@{-}[rrdd]^(0.01){\rho_1} \\
\\
 &&*{\bullet}\ar@{-}[rr]_(0.05){\rho_2}_(0.99){\rho_3}^5&&*{\bullet}\\
 \\
 *{\bullet}\ar@{-}[rruu]_(0.01){\tilde{\rho_0}}\\
 } $$
 factorized by $(\tilde{\beta}^{-3}\beta^3)^2 = id$ and $(\tilde{\beta}^5\beta^5)^s = id$, where $\beta := \rho_1\rho_2\rho_3$ and $\tilde{\beta} := \tilde{\rho_0}\rho_2\rho_3$.
 
\end{proposition}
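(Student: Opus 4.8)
The plan is to mirror the proofs of Corollary~\ref{cor:ncube_polytope} and Proposition~\ref{pp:hyper_ncube}. First I would settle that $\mathcal{H}({2}^{\{3,5\},\mathcal{G}(s)})$ is a regular hypertope: the icosahedron $\{3,5\}$ is convex, so its poset is a lattice, whence Lemma~\ref{2KGlattice} makes ${2}^{\{3,5\},\mathcal{G}(s)}$ non-degenerate, and Corollary~\ref{H(P)hypertopes} applies directly to give both the hypertope and the identification of its automorphism group with the halving group. Since ${2}^{\{3,5\},\mathcal{G}(s)}$ has type $\{4,3,5\}$ with $p_1=4$ even, the halving subgroup has index $2$, so its order is $\tfrac{1}{2}\,|G({2}^{\{3,5\},\mathcal{G}(s)})| = \tfrac{1}{2}\cdot 120\cdot(2s)^6 = 60\cdot(2s)^6$, as claimed. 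The displayed Coxeter diagram is exactly the one prescribed for the halving of a type-$\{4,p_2,p_3\}$ polytope in Section~\ref{halv}: here $p_2=3$, so $\tilde{\rho_0}$ and $\rho_1$ are both joined to $\rho_2$ by unlabelled branches, $p_3=5$ gives the branch $\rho_2\rho_3$, and the branch $\tilde{\rho_0}\rho_1$ is improper because $p_1/2=2$.

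The core of the argument is rewriting the two defining relations of Corollary~\ref{cor:435} in terms of the halving generators. The key fact is that $\rho_0$ commutes with $\rho_2$ and $\rho_3$, since in $[4,3,5]$ the node $\rho_0$ is joined only to $\rho_1$. Pushing the outer $\rho_0$'s inward and inserting $\rho_0^2=id$ after each occurrence of $\rho_1$ yields the identity $\rho_0\beta^i\rho_0 = \tilde{\beta}^i$ for all $i$, where $\beta=\rho_1\rho_2\rho_3$ and $\tilde{\beta}=\tilde{\rho_0}\rho_2\rho_3$; I would verify this by a short induction on $i$, the base case being $\rho_0\beta\rho_0 = \rho_0\rho_1\rho_0\rho_2\rho_3 = \tilde{\beta}$, and taking inverses gives the companion identity $\rho_0\beta^{-i}\rho_0=\tilde{\beta}^{-i}$. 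Applying $\rho_0\beta^{-3}\rho_0 = \tilde{\beta}^{-3}$ turns the relation $(\rho_0\beta^{-3}\rho_0\beta^3)^2=id$ into $(\tilde{\beta}^{-3}\beta^3)^2=id$, and applying $\rho_0\beta^5\rho_0=\tilde{\beta}^5$ turns $(\rho_0\beta^5\rho_0\beta^5)^s=id$ into $(\tilde{\beta}^5\beta^5)^s=id$, which are precisely the stated relations.

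The step I expect to require the most care is not the rewriting itself, which is a routine adaptation of Proposition~\ref{pp:hyper_ncube}, but the completeness claim: that the Coxeter group of the displayed diagram factored by \emph{only} these two relations equals $H({2}^{\{3,5\},\mathcal{G}(s)})$ rather than a proper cover. Since every rewritten relation holds in the halving group, the presented group surjects onto it; the reverse inequality of orders is what must be argued. I would obtain it from the fact that the rewriting sets up a bijective correspondence between the defining relations of $G({2}^{\{3,5\},\mathcal{G}(s)})$ supplied by Corollary~\ref{cor:435} and those of the presented group, so that the presented group has order at most $60\cdot(2s)^6$. Matching this bound against the independently computed order $|H({2}^{\{3,5\},\mathcal{G}(s)})| = 60\cdot(2s)^6$ from the index-$2$ structure forces the surjection to be an isomorphism, completing the proof.
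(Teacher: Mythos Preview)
Your proposal is correct and follows exactly the template the paper intends: the paper omits this proof, stating that it ``follows the same ideas present in the proofs of Corollary~\ref{cor:ncube_polytope} and Proposition~\ref{pp:hyper_ncube}'', and your argument reproduces that template step for step (lattice $\Rightarrow$ non-degenerate via Lemma~\ref{2KGlattice}, then Corollary~\ref{H(P)hypertopes}, then the rewriting $\rho_0\beta^{\pm i}\rho_0=\tilde{\beta}^{\pm i}$). Your final paragraph on completeness of the presentation actually goes beyond what the paper argues anywhere---Proposition~\ref{pp:hyper_ncube} simply rewrites the relations without bounding the presented group's order---so while your ``bijective correspondence'' justification is a bit informal (a cleaner route is to adjoin an involution $\rho_0$ swapping $\tilde{\rho_0}\leftrightarrow\rho_1$ and check the extension satisfies the presentation of Corollary~\ref{cor:435}), you are being more careful than the source, not less.
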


\subsection{The polytope ${2 }^{\{5,3\},\mathcal{G}(s)}$ and hypertope $\mathcal{H}({2}^{\{5,3\},\mathcal{G}(s)})$}

Let $\mathcal{P}$ by the dual of the icosahedron, the dodecahedron. Using GAP\cite{GAP} and the double coset action described in equation~\ref{eq:doublecosetdiagonal}, we can determine the diagonal classes of the dodecahedron: $\{F_0, F_0\beta^i\}$, for $1\leq i\leq 5$, where $\beta = \tau_0\tau_1\tau_2$ is an element of the group $[5,3] = \langle \tau_0, \tau_1, \tau_2\rangle$. 
As before, we give the polytope ${2 }^{\{5,3\},\mathcal{G}(s)}$ and the hypertope $\mathcal{H}({2 }^{\{5,3\},\mathcal{G}(s)})$.

\begin{corollary}\label{cor:453}
  Let $2\leq s < \infty$. Then ${2}^{\{5,3\},\mathcal{G}(s)}$ is a family of $4$-polytopes with type $\{4,5,3\}$ and automorphism group $D_s^{10}\rtimes [5,3]$ of order $ 120 \cdot (2s)^{10} $. Moreover, the group $G({2}^{\{5,3\},\mathcal{G}(s)}) := \langle \rho_0, \rho_1, \rho_2, \rho_3\rangle $ is the quotient of the Coxeter group $[4,5,3]$ by the relations $(\rho_0\beta^{-i}\rho_0\beta^i)^2 = id$, for $2\leq i\leq 4$, and $(\rho_0\beta^5\rho_0\beta^5)^s = id$, where $\beta = \rho_1\rho_2\rho_3$.
\end{corollary}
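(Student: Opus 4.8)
The plan is to read off the numerical data (type, group structure, order) directly from Theorem~\ref{2KG(s)} and then obtain the defining relations from the general presentation recorded just after that theorem, using the diagonal-class computation established immediately above the statement. Since the dodecahedron $\{5,3\}$ is centrally symmetric with $|V(\mathcal{P})|=20$ and $|G(\mathcal{P})|=|[5,3]|=120$, part~(1) of Theorem~\ref{2KG(s)} gives that $2^{\{5,3\},\mathcal{G}(s)}$ has type $\{4,5,3\}$, while part~(2) gives the group $D_s^{10}\rtimes[5,3]$ (here $q=|V(\mathcal{P})|/2=10$) of order $(2s)^{10}\cdot 120=120\cdot(2s)^{10}$. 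This settles the first assertion with no further work.

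For the presentation I would invoke the remark following Theorem~\ref{2KG(s)}: since $[5,3]$ is a Coxeter group, the relation set $R$ is empty, so $G(2^{\{5,3\},\mathcal{G}(s)})$ is the Coxeter group $[4,5,3]$ factorized only by $(\rho_0\alpha\rho_0\alpha)^s=id$ together with one relation $(\rho_0\tau^{-1}\rho_0\tau)^2=id$ for each diagonal class $\{F_0,F_0\tau\}$ distinct from the antipodal one. The diagonal classes of the dodecahedron were just computed to be $\{F_0,F_0\beta^i\}$ for $1\leq i\leq 5$, with $\beta=\rho_1\rho_2\rho_3$; and from Table~\ref{tab:centralinvo} the central involution is $\alpha=(\tau_0\tau_1\tau_2)^5=\beta^5$, so the antipodal class $\{F_0,F_0\beta^5\}$ contributes $(\rho_0\beta^5\rho_0\beta^5)^s=id$, while the four remaining classes $i=1,2,3,4$ each contribute $(\rho_0\beta^{-i}\rho_0\beta^i)^2=id$.

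The only genuine verification is that the $i=1$ relation is redundant, exactly as in the $n$-cube case of Corollary~\ref{cor:ncube_polytope}. Using the string relations $(\rho_0\rho_2)^2=(\rho_0\rho_3)^2=id$, the generator $\rho_0$ commutes with $\rho_2$ and $\rho_3$, so $\rho_0\beta^{-1}\rho_0\beta=\rho_0\rho_3\rho_2\rho_1\rho_0\rho_1\rho_2\rho_3=\rho_3\rho_2(\rho_0\rho_1)^2\rho_2\rho_3$, whence $(\rho_0\beta^{-1}\rho_0\beta)^2=\rho_3\rho_2(\rho_0\rho_1)^4\rho_2\rho_3$. This is trivial precisely because of the type relation $(\rho_0\rho_1)^4=id$ already present in $[4,5,3]$, so the $i=1$ case adds nothing and may be dropped. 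Hence only the relations for $2\leq i\leq 4$ together with the antipodal relation $(\rho_0\beta^5\rho_0\beta^5)^s=id$ remain, matching the stated presentation. I expect no real obstacle: the structural claims are immediate from Theorem~\ref{2KG(s)}, and the sole computation---the collapse of the $i=1$ relation---is a short conjugation argument identical in form to the one already carried out for the cube.
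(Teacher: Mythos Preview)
Your proposal is correct and follows precisely the approach the paper intends: the proof is omitted in the paper with the remark that it ``follow[s] the same ideas present in the proofs of Corollary~\ref{cor:ncube_polytope},'' and you have carried out exactly that argument, reading off the structural data from Theorem~\ref{2KG(s)} and eliminating the $i=1$ relation by the same conjugation trick used in the $n$-cube case.
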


\begin{proposition}
 Let $2\leq s < \infty$. The incidence system $$\mathcal{H}({2 }^{\{5,3\},\mathcal{G}(s)}) = \Gamma(H({2 }^{\{5,3\},\mathcal{G}(s)}), (H_i)_{i\in \{0,\ldots,3\}}),$$ where $H({2 }^{\{5,3\},\mathcal{G}(s)}):=\langle \rho_0\rho_1\rho_0, \rho_1, \rho_2, \rho_3\rangle = \langle \tilde{\rho_0}, \rho_1, \rho_2, \rho_3\rangle$, is a regular hypertope and its automorphism group, of size $60 \cdot (2s)^{10}$, has the relations given by its Coxeter diagram
$$\xymatrix@-1.5pc{*{\bullet}\ar@{-}[rrdd]^(0.01){\rho_1}^5 \\
\\
 &&*{\bullet}\ar@{-}[rr]_(0.05){\rho_2}_(0.99){\rho_3}&&*{\bullet}\\
 \\
 *{\bullet}\ar@{-}[rruu]_(0.01){\tilde{\rho_0}}_5\\
 } $$
 and the extra relations $(\tilde{\beta}^{-i}\beta^i)^2 = id$, for $2 \leq i \leq 4$, and $(\tilde{\beta}^5\beta^5)^s = id$, where $\beta := \rho_1\rho_2\rho_3$ and $\tilde{\beta} := \tilde{\rho_0}\rho_2\rho_3$.
 
\end{proposition}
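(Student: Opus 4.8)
The plan is to follow the template of Corollary~\ref{cor:ncube_polytope} and Proposition~\ref{pp:hyper_ncube}, with $\beta = \rho_1\rho_2\rho_3$ for the dodecahedron playing the role that $\rho_1\cdots\rho_n$ played for the $n$-cube. First I would establish that $\mathcal{H}({2}^{\{5,3\},\mathcal{G}(s)})$ is a regular hypertope: the dodecahedron is convex, so its poset is a lattice, and Lemma~\ref{2KGlattice} makes ${2}^{\{5,3\},\mathcal{G}(s)}$ non-degenerate, which is exactly the hypothesis of Corollary~\ref{H(P)hypertopes}. For the order, since ${2}^{\{5,3\},\mathcal{G}(s)}$ has type $\{4,5,3\}$ with even leading entry $p_1 = 4$, the halving group has index $2$ in $G({2}^{\{5,3\},\mathcal{G}(s)})$, so its order is $\tfrac12 \cdot 120 \cdot (2s)^{10} = 60\cdot(2s)^{10}$. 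The displayed Coxeter diagram is read off from the halving construction of Section~\ref{halv} applied to type $\{4,5,3\}$: the branches joining $\rho_2$ to $\rho_1$ and to $\tilde{\rho_0}$ both acquire label $5$, the edge $\rho_2\rho_3$ stays unlabelled (order $3$), and the edge joining $\rho_1$ and $\tilde{\rho_0}$ carries label $p_1/2 = 2$, hence is improper and omitted.

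The computational heart is translating the two defining relations of Corollary~\ref{cor:453}. The observation I would isolate first is that $\rho_0$ commutes with $\rho_2$ and $\rho_3$, so that
$$\tilde{\beta} = \tilde{\rho_0}\rho_2\rho_3 = \rho_0\rho_1\rho_0\rho_2\rho_3 = \rho_0\rho_1\rho_2\rho_3\rho_0 = \rho_0\beta\rho_0.$$
Since $\rho_0^2 = id$, this yields $\tilde{\beta}^{\,i} = \rho_0\beta^{\,i}\rho_0$ for all $i$ and hence $\tilde{\beta}^{-i}\beta^{\,i} = \rho_0\beta^{-i}\rho_0\beta^{\,i}$. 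Each relation $(\rho_0\beta^{-i}\rho_0\beta^{\,i})^2 = id$ of Corollary~\ref{cor:453} therefore becomes $(\tilde{\beta}^{-i}\beta^{\,i})^2 = id$ for $2 \le i \le 4$ with no extra work. For the remaining relation I would use that $\beta^5$ is the central involution of the dodecahedron (Table~\ref{tab:centralinvo}), so $\beta^5 = \beta^{-5}$, together with $(\tilde{\beta}^5)^2 = \rho_0\beta^{10}\rho_0 = id$, which gives $\tilde{\beta}^5 = \tilde{\beta}^{-5}$; rewriting $(\rho_0\beta^5\rho_0\beta^5)^s = id$ then produces exactly $(\tilde{\beta}^5\beta^5)^s = id$.

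The hard part is not that these relations hold but that they are a \emph{complete} set of defining relations for $H({2}^{\{5,3\},\mathcal{G}(s)})$, i.e. that the group is the quotient of the displayed Coxeter group by precisely them. I would argue this as in Proposition~\ref{pp:hyper_ncube}: start from the complete presentation of $G({2}^{\{5,3\},\mathcal{G}(s)})$ furnished by Corollary~\ref{cor:453}, use the index-$2$ inclusion to express $G$ through the halving generators $\tilde{\rho_0},\rho_1,\rho_2,\rho_3$, and check that the diagram relations recover the Coxeter part while the computation above shows that the two non-Coxeter relations of $G$ descend to the two stated families and nothing more. A concluding order check, confirming that the abstractly presented group has order $60\cdot(2s)^{10}$, rules out a proper quotient and identifies it with $H({2}^{\{5,3\},\mathcal{G}(s)})$.
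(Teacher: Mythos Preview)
Your proposal is correct and follows essentially the same approach as the paper, which in fact omits the proof of this proposition entirely and refers the reader to the template of Proposition~\ref{pp:hyper_ncube}. Your observation that $\tilde{\beta} = \rho_0\beta\rho_0$ (whence $\tilde{\beta}^{-i}\beta^i = \rho_0\beta^{-i}\rho_0\beta^i$) is a slightly cleaner packaging of the very computation carried out line-by-line in that proposition, and your additional remarks on completeness of the presentation and the order check go a little beyond what the paper makes explicit.
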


\subsection{The polytope ${2 }^{\{3,4,3\},\mathcal{G}(s)}$ and hypertope $\mathcal{H}({2}^{\{3,4,3\},\mathcal{G}(s)})$}\label{ss:343}

Let $\mathcal{P}$ be the self-dual polytope of type $\{3,4,3\}$. Using GAP\cite{GAP} we have that the 24-cell has 4 distinct diagonal classes: $\{F_0, F_0\beta^i\}$, for $i\in \{1,3,4,6\}$, where $\beta = \tau_0\tau_1\tau_2\tau_3$ is an element of the group $[3,4,3] = \langle \tau_0, \tau_1, \tau_2, \tau_3\rangle$. 
We will determine the polytope ${2 }^{\{3,4,3\},\mathcal{G}(s)}$ and regular hypertope $\mathcal{H}({2 }^{\{3,4,3\},\mathcal{G}(s)})$.

\begin{corollary}\label{cor:4343}
  Let $2\leq s < \infty$. Then ${2}^{\{3,4,3\},\mathcal{G}(s)}$ is a $5$-polytope of  type $\{4,3,4,3\}$ and automorphism group $D_s^{12}\rtimes [3,4,3]$ of order $1152\cdot (2s)^{12}$. Moreover, the group $G({2}^{\{3,4,3\},\mathcal{G}(s)}) := \langle \rho_0, \rho_1, \rho_2, \rho_3, \rho_4\rangle $ is the quotient of the locally toroidal Coxeter group $[4,3,4,3]$ factorized by the relations $(\rho_0\beta^{-i}\rho_0\beta^i)^2 = id\mbox{, for }i \in\{3,4\}$, and $(\rho_0\beta^6\rho_0\beta^6)^s = id,$ where $\beta = \rho_1\rho_2\rho_3\rho_4$ and its toroidal residue is the cubic toroid $\{4,3,4\}_{(4,0,0)}$.
\end{corollary}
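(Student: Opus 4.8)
The plan is to apply Theorem~\ref{2KG(s)} directly to the $24$-cell $\mathcal{P}=\{3,4,3\}$, which appears in Table~\ref{tab:centralinvo} as a centrally symmetric regular $4$-polytope with $24$ vertices, central involution $\alpha=(\tau_0\tau_1\tau_2\tau_3)^6=\beta^6$, and group of order $1152$. By part (1) of the theorem, $2^{\{3,4,3\},\mathcal{G}(s)}$ is of type $\{4,3,4,3\}$, giving the $5$-polytope claimed. By part (2), its group is $W\rtimes G(\mathcal{P})$ with $W\cong D_s^q$ where $q=|V(\mathcal{P})|/2=24/2=12$, so the group is $D_s^{12}\rtimes[3,4,3]$ of order $(2s)^{12}\cdot 1152$. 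This reproduces the stated order and group structure immediately, so the content of the corollary beyond Theorem~\ref{2KG(s)} is the explicit list of defining relations and the identification of the toroidal residue.

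For the relations, I would invoke the remark following Theorem~\ref{2KG(s)}: since $[3,4,3]$ is itself a Coxeter group (the $24$-cell is a regular spherical polytope with no extra relations), the group of $2^{\{3,4,3\},\mathcal{G}(s)}$ is the Coxeter group $[4,3,4,3]$ factorized by the extra relations $(\rho_0\alpha\rho_0\alpha)^s=id$ and $(\rho_0\tau^{-1}\rho_0\tau)^2=id$ for each $\tau$ giving a distinct non-antipodal diagonal class. The key input here is the GAP computation stated just before the corollary: the $24$-cell has exactly $4$ diagonal classes, represented by $\{F_0,F_0\beta^i\}$ for $i\in\{1,3,4,6\}$, with $\beta=\rho_1\rho_2\rho_3\rho_4$. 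The class $i=1$ (edges) and $i=6$ (antipodes, $\beta^6=\alpha$) are the distinguished ones: the edge class $i=1$ yields only a relation already implied by the Schl\"{a}fli type $(\rho_0\rho_1)^4=id$, exactly as in the $i=1$ computation inside the proof of Corollary~\ref{cor:ncube_polytope}, so it contributes nothing new; the antipodal class $i=6$ gives the single proper branch relation $(\rho_0\beta^6\rho_0\beta^6)^s=id$. The two intermediate classes $i=3$ and $i=4$ are improper branches of $\mathcal{G}(s)$ and hence contribute $(\rho_0\beta^{-i}\rho_0\beta^i)^2=id$ for $i\in\{3,4\}$. Using $\sigma_{F_0\beta^i}=\beta^{-i}\rho_0\beta^i$ from \eqref{eq:sigmaFtoF0}, each diagonal class translates into precisely the relation asserted.

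For the toroidal residue, the facet residue of a polytope of type $\{4,3,4,3\}$ is of type $\{4,3,4\}$, so I would identify which cubic toroid $\{4,3,4\}_{(b,c,0^{\ldots})}$ arises. The cleanest route mirrors the residue computation at the end of the proof of Corollary~\ref{cor:ncube_polytope}: one expands the $i=3$ relation $(\rho_0\beta^{-3}\rho_0\beta^3)^2=id$ in terms of $\rho_0,\rho_1,\rho_2,\rho_3$ (dropping $\rho_4$ to pass to the residue) and simplifies using the string commutations and $(\rho_0\rho_1)^4=id$, extracting a translation relation of the form $(\rho_0\rho_1\rho_2\rho_3\rho_2\rho_1)^{4}$ or the analogous length-$4$ translation that pins the vector to $(4,0,0)$. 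By \cite[Section 6D]{ARP}, a translation of period $4$ along a coordinate direction with the other components zero characterizes $\{4,3,4\}_{(4,0,0)}$, establishing the residue.

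The main obstacle is the correct bookkeeping in the residue computation: one must verify that among the four diagonal classes it is specifically the $i=3$ (or $i=4$) relation that governs the residue translation vector, and that the commutation moves through the $\rho_4$-free subword cleanly to yield a period-$4$ translation rather than a larger or degenerate one. Since the corollary statement says most proofs are omitted because they follow Corollary~\ref{cor:ncube_polytope} and Proposition~\ref{pp:hyper_ncube}, I expect the intended proof to be brief, citing Theorem~\ref{2KG(s)} for the type, order and group, the diagonal-class enumeration for the relations, and an analogous residue word-reduction for $\{4,3,4\}_{(4,0,0)}$.
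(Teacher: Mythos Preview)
Your proposal is correct and follows essentially the same approach as the paper: invoke Theorem~\ref{2KG(s)} for the type, group and order, use the GAP enumeration of the four diagonal classes of the $24$-cell together with the remark after Theorem~\ref{2KG(s)} to obtain exactly the listed extra relations, and extract the toroidal residue from the $i=3$ relation. The paper's proof is the one-line version of what you wrote, citing Corollary~\ref{cor:ncube_polytope} and then stating that $(\rho_0\beta^{-3}\rho_0\beta^3)^2=id$ implies $(\rho_0\rho_1\rho_2\rho_3\rho_4\rho_3\rho_2\rho_1)^4=id$; note that this is equivalent to your $(\rho_0\rho_1\rho_2\rho_3\rho_2\rho_1)^4=id$ since the braid relation $\rho_3\rho_4\rho_3=\rho_4\rho_3\rho_4$ and the commutations of $\rho_4$ with $\rho_0,\rho_1,\rho_2$ give $\rho_0\rho_1\rho_2\rho_3\rho_4\rho_3\rho_2\rho_1=\rho_4(\rho_0\rho_1\rho_2\rho_3\rho_2\rho_1)\rho_4$, so one does not ``drop'' $\rho_4$ but rather conjugates it away.
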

\begin{proof}
 The proof follows the same idea as in Corollary~\ref{cor:ncube_polytope}. To prove that the toroidal residue is the cubic toroid $\{4,3,4\}_{(4,0,0)}$, observe that the relation $(\rho_0\beta^{-3}\rho_0\beta^3)^2 = id$ implies that $(\rho_0\rho_1\rho_2\rho_3\rho_4\rho_3\rho_2\rho_1)^4 = id$. 
\end{proof}

\begin{proposition}
 Let $2\leq s < \infty$. The incidence system $$\mathcal{H}({2 }^{\{3,4,3\},\mathcal{G}(s)}) = \Gamma(H({2 }^{\{3,4,3\},\mathcal{G}(s)}), (H_i)_{i\in \{0,\ldots,4\}}),$$ where $H({2 }^{\{3,4,3\},\mathcal{G}(s)}):=\langle \rho_0\rho_1\rho_0, \rho_1, \rho_2, \rho_3, \rho_4\rangle = \langle \tilde{\rho_0}, \rho_1, \rho_2, \rho_3, \rho_4 \rangle$, is a regular hypertope and its automorphism group, of size $576 \cdot (2s)^{12}$, has the relations given by its Coxeter diagram
$$\xymatrix@-1.5pc{*{\bullet}\ar@{-}[rrdd]^(0.01){\rho_1} \\
\\
 &&*{\bullet}\ar@{-}[rr]^4_(0.05){\rho_2}&&*{\bullet}\ar@{-}[rr]_(0.05){\rho_3}_(0.99){\rho_4}&&*{\bullet}\\
 \\
 *{\bullet}\ar@{-}[rruu]_(0.01){\tilde{\rho_0}}\\
 } $$
 and the extra relations $(\tilde{\beta}^{-i}\beta^i)^2 = id\mbox{, for } i\in\{3,4\},$ and $(\tilde{\beta}^6\beta^6)^s = id,$ where $\beta := \rho_1\rho_2\rho_3\rho_4$ and $\tilde{\beta} := \tilde{\rho_0}\rho_2\rho_3\rho_4$. Moreover, its toroidal residue is $\left\{\nfrac{3}{3},4\right\}_{(4,0,0)}$.
\end{proposition}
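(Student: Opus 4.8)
The plan is to follow the template of Proposition~\ref{pp:hyper_ncube}, treating $2^{\{3,4,3\},\mathcal{G}(s)}$ exactly as $2^{\{4,3^{n-2}\},\mathcal{G}(s)}$ was treated there. First I would verify that $\mathcal{H}(2^{\{3,4,3\},\mathcal{G}(s)})$ is a regular hypertope: the $24$-cell $\{3,4,3\}$ is convex, so its poset is a lattice, and by Lemma~\ref{2KGlattice} the poset of $2^{\{3,4,3\},\mathcal{G}(s)}$ is a lattice as well, whence that polytope is non-degenerate. Corollary~\ref{H(P)hypertopes} then applies verbatim and gives that $\Gamma(H(2^{\{3,4,3\},\mathcal{G}(s)}),(H_i)_{i\in\{0,\ldots,4\}})$ is a regular hypertope with automorphism group $H(2^{\{3,4,3\},\mathcal{G}(s)}) = \langle\tilde{\rho_0},\rho_1,\rho_2,\rho_3,\rho_4\rangle$.

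Next I would record the Coxeter diagram. Since $2^{\{3,4,3\},\mathcal{G}(s)}$ has type $\{4,3,4,3\}$ with even leading entry $p_1=4$, the general halving diagram of Section~\ref{halv} specialises so that the branch between $\tilde{\rho_0}$ and $\rho_1$ has label $p_1/2=2$ (hence improper and omitted), while $\tilde{\rho_0}$ and $\rho_1$ each join $\rho_2$ with label $p_2=3$, the edge $\rho_2\rho_3$ keeps label $p_3=4$, and $\rho_3\rho_4$ has label $p_4=3$; this is the displayed diagram. For the extra relations I would transport those of Corollary~\ref{cor:4343}. Exactly as in Proposition~\ref{pp:hyper_ncube}, since $\rho_0$ commutes with $\rho_2,\rho_3,\rho_4$ and $\rho_0\rho_1\rho_0=\tilde{\rho_0}$, one obtains $\rho_0\beta^{\pm1}\rho_0=\tilde{\beta}^{\pm1}$ with $\beta=\rho_1\rho_2\rho_3\rho_4$ and $\tilde{\beta}=\tilde{\rho_0}\rho_2\rho_3\rho_4$, and hence $\rho_0\beta^{\pm i}\rho_0=\tilde{\beta}^{\pm i}$ for every $i$. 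Substituting into $(\rho_0\beta^{-i}\rho_0\beta^i)^2=id$ for $i\in\{3,4\}$ and into $(\rho_0\beta^6\rho_0\beta^6)^s=id$ yields the claimed relations $(\tilde{\beta}^{-i}\beta^i)^2=id$ and $(\tilde{\beta}^6\beta^6)^s=id$. (The remaining diagonal class $i=1$ of the $24$-cell only reproduces the type relation $(\rho_0\rho_1)^4$, as in Corollary~\ref{cor:ncube_polytope}, so it contributes nothing extra.)

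For the order I would invoke the fact recalled in Section~\ref{halv} that $H(\mathcal{P})$ has index $2$ in $G(\mathcal{P})$ exactly when $p_1$ is even; as $p_1=4$ here, halving the order $1152\cdot(2s)^{12}$ from Corollary~\ref{cor:4343} gives $576\cdot(2s)^{12}$. For the toroidal residue I would identify the rank-$4$ residue $\langle\tilde{\rho_0},\rho_1,\rho_2,\rho_3\rangle$ as the halving of the facet $\langle\rho_0,\rho_1,\rho_2,\rho_3\rangle$ of $2^{\{3,4,3\},\mathcal{G}(s)}$, which by Corollary~\ref{cor:4343} is the cubic toroid $\{4,3,4\}_{(4,0,0)}$; its halving is the toroidal hypertope $\left\{\nfrac{3}{3},4\right\}_{(4,0,0)}$, precisely the $n=3$, $s=2$ instance of Proposition~\ref{tildeB3}, confirming both the type and the torus parameters.

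The relation bookkeeping is routine; the point deserving care is that the listed relations form a \emph{complete} presentation of $H(2^{\{3,4,3\},\mathcal{G}(s)})$ rather than merely a set of relations that hold. The computations above only verify that each relation holds in $H$. To close the presentation I would argue, as is implicit in Proposition~\ref{pp:hyper_ncube}, that since $H$ is the index-$2$ subgroup $\langle\tilde{\rho_0},\rho_1,\rho_2,\rho_3,\rho_4\rangle$ of the group $G(2^{\{3,4,3\},\mathcal{G}(s)})$ whose full presentation is known from Corollary~\ref{cor:4343}, a Reidemeister--Schreier rewriting on the transversal $\{id,\rho_0\}$ turns that presentation into one on the halving generators comprising exactly the diagram relations and the transported extra relations; the order count $576\cdot(2s)^{12}=|G|/2$ then certifies that no relation has been lost. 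This is the step I expect to be the real content, since everything else is mechanical transport of already-established data.
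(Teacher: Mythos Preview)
Your proposal is correct and follows essentially the same approach as the paper: the paper's proof simply says ``follows the same idea as in Proposition~\ref{pp:hyper_ncube}'' and, for the toroidal residue, rewrites the relation $(\rho_0\rho_1\rho_2\rho_3\rho_4\rho_3\rho_2\rho_1)^4=id$ of Corollary~\ref{cor:4343} as $(\tilde{\rho_0}\rho_2\rho_3\rho_4\rho_3\rho_2\rho_1)^4=id$ via the manipulation in Proposition~\ref{tildeB3}, which is exactly your identification of the residue with the $n=3$, $s=2$ case there. Your final paragraph on Reidemeister--Schreier and the order check goes beyond what the paper actually argues (the paper, like Proposition~\ref{pp:hyper_ncube}, only verifies that the listed relations hold and does not explicitly close the presentation), so you are being more careful than the original on that point.
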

\begin{proof}
 The proof follows the same idea as in Proposition~\ref{pp:hyper_ncube}. Moreover, as seen in the proof of Proposition~\ref{tildeB3}, we can rewrite the relation $(\rho_0\rho_1\rho_2\rho_3\rho_4\rho_3\rho_2\rho_1)^4 = id$, given in Corollary~\ref{cor:4343}, as $(\tilde{\rho_0}\rho_2\rho_3\rho_4\rho_3\rho_2\rho_1)^4 = id$, which is the factorizing relation of the hypertope $\left\{\nfrac{3}{3},4\right\}_{(4,0,0)}$.
\end{proof}

\subsection{The polytope ${2 }^{\{3,3,5\},\mathcal{G}(s)}$ and hypertope $\mathcal{H}({2}^{\{3,3,5\},\mathcal{G}(s)})$}\label{ss:335}

Let $\mathcal{P}$ be the 600-cell. The 600-cell has 8 distinct diagonal classes, which can be obtained with GAP\cite{GAP} and can be represented by $\{F_0, F_0\beta^i\}$, for $i\in \{1,4,6,7,9,10,12,15\}$, where $\beta = \tau_0\tau_1\tau_2\tau_3$ is an element of the group $[3,3,5] = \langle \tau_0, \tau_1, \tau_2, \tau_3\rangle$. 
Let us determine the polytope ${2}^{\{3,3,5\},\mathcal{G}(s)}$ and the hypertope $\mathcal{H}({2 }^{\{3,3,5\},\mathcal{G}(s)})$.

\begin{corollary}\label{cor:4335}
  Let $2\leq s < \infty$. Then ${2}^{\{3,3,5\},\mathcal{G}(s)}$ is a $5$-polytope of type $\{4,3,3,5\}$ and automophism group $D_s^{60}\rtimes [3,3,5]$ of order $ 14400 \cdot (2s)^{60}$. Moreover, the group $G({2}^{\{3,3,5\},\mathcal{G}(s)}) := \langle \rho_0, \rho_1, \rho_2, \rho_3, \rho_4\rangle $ is the quotient of the Coxeter group $[4,3,3,5]$ by the relations $(\rho_0\beta^{-i}\rho_0\beta^i)^2 = id$, for $i \in\{4,6,7,9,10,12\}$, and $(\rho_0\beta^{15}\rho_0\beta^{15})^s = id$, where $\beta = \rho_1\rho_2\rho_3\rho_4$.
\end{corollary}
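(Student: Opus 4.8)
The plan is to obtain all three structural assertions — the type, the semidirect‑product form of the group, and its order — as a direct instance of Theorem~\ref{2KG(s)}, and then to recover the presentation from the recipe recorded immediately after that theorem. First I would apply Theorem~\ref{2KG(s)} with $\mathcal{P}$ the $600$-cell, of type $\{3,3,5\}$ with $p_1 = 3 \geq 3$: part (1) gives that $2^{\{3,3,5\},\mathcal{G}(s)}$ has type $\{4,3,3,5\}$, and part (2) gives $G(2^{\{3,3,5\},\mathcal{G}(s)}) = D_s^{q} \rtimes [3,3,5]$ with $q = |V(\mathcal{P})|/2$. Reading off Table~\ref{tab:centralinvo}, the $600$-cell has $120$ vertices and $|[3,3,5]| = 14400$, so $q = 60$ and the order is $(2s)^{60}\cdot 14400$, as claimed.

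For the presentation I would invoke the description given after Theorem~\ref{2KG(s)}: since $[3,3,5]$ is itself a Coxeter group (the factorizing set $R$ is empty), $2^{\{3,3,5\},\mathcal{G}(s)}$ is the quotient of $[4,3,3,5]$ by the single relation $(\rho_0\alpha\rho_0\alpha)^s = id$ coming from the antipodal diagonal class, together with one relation $(\rho_0\tau^{-1}\rho_0\tau)^2 = id$ for each remaining diagonal class, $\tau$ being a representative. Table~\ref{tab:centralinvo} identifies the central involution of $\{3,3,5\}$ as $\alpha = (\tau_0\tau_1\tau_2\tau_3)^{15} = \beta^{15}$ under the identification of generators in \eqref{eq:rho_def}; since $\alpha$ is an involution, $\beta^{-15} = \beta^{15}$ and the antipodal relation reads $(\rho_0\beta^{15}\rho_0\beta^{15})^s = id$. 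The GAP computation quoted before the statement gives the eight diagonal classes with representatives $\{F_0,F_0\beta^i\}$, $i\in\{1,4,6,7,9,10,12,15\}$; discarding the antipodal one ($i=15$) leaves the seven relations $(\rho_0\beta^{-i}\rho_0\beta^i)^2 = id$ for $i\in\{1,4,6,7,9,10,12\}$.

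It then remains only to show that the relation for $i=1$ is redundant, reducing the list to $i\in\{4,6,7,9,10,12\}$ as stated; this is the single genuinely computational step, and it mirrors the argument in Corollary~\ref{cor:ncube_polytope}. Writing $\beta = \rho_1\rho_2\rho_3\rho_4$ and using that $\rho_0$ commutes with $\rho_2,\rho_3,\rho_4$ in the string group $[4,3,3,5]$, I would simplify
$$\rho_0\beta^{-1}\rho_0\beta = \rho_0\rho_4\rho_3\rho_2\rho_1\rho_0\rho_1\rho_2\rho_3\rho_4 = (\rho_4\rho_3\rho_2)(\rho_0\rho_1)^2(\rho_2\rho_3\rho_4),$$
so that $(\rho_0\beta^{-1}\rho_0\beta)^2$ is a conjugate of $(\rho_0\rho_1)^4$, and hence is trivial precisely because $o(\rho_0\rho_1)=4$ is already forced by the type $\{4,3,3,5\}$.

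The argument is almost entirely bookkeeping once Theorem~\ref{2KG(s)} is in hand; the only input that is not a mechanical substitution is the enumeration of the eight diagonal classes of the $600$-cell, which is delegated to GAP and quoted in the text preceding the statement. Consequently I expect no serious obstacle, and I would remark that the proof is omitted in the paper precisely because it repeats, verbatim in structure, the reasoning of Corollary~\ref{cor:ncube_polytope}.
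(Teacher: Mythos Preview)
Your proposal is correct and follows exactly the approach the paper intends: it applies Theorem~\ref{2KG(s)} to read off the type, group structure, and order, then uses the recipe after that theorem together with the GAP-computed diagonal classes of the $600$-cell, and finally eliminates the $i=1$ relation by conjugating it to $(\rho_0\rho_1)^4$, precisely as in the proof of Corollary~\ref{cor:ncube_polytope}. The paper itself omits the proof for this very reason.
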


\begin{proposition}
 Let $2\leq s < \infty$. The incidence system $$\mathcal{H}({2 }^{\{3,3,5\},\mathcal{G}(s)}) = \Gamma(H({2 }^{\{3,3,5\},\mathcal{G}(s)}), (H_i)_{i\in \{0,\ldots,4\}}),$$ where $H({2 }^{\{3,3,5\},\mathcal{G}(s)}):=\langle \rho_0\rho_1\rho_0, \rho_1, \rho_2, \rho_3, \rho_4\rangle = \langle \tilde{\rho_0}, \rho_1, \rho_2, \rho_3, \rho_4 \rangle$, is a regular hypertope and its automorphism group, of size $7200 \cdot (2s)^{60}$, has the relations given by its Coxeter diagram
$$\xymatrix@-1.5pc{*{\bullet}\ar@{-}[rrdd]^(0.01){\rho_1} \\
\\
 &&*{\bullet}\ar@{-}[rr]_(0.05){\rho_2}&&*{\bullet}\ar@{-}[rr]^5_(0.05){\rho_3}_(0.99){\rho_4}&&*{\bullet}\\
 \\
 *{\bullet}\ar@{-}[rruu]_(0.01){\tilde{\rho_0}}\\
 } $$
 and the extra relations $(\tilde{\beta}^{-i}\beta^i)^2 = id$, for $i\in\{4,6,7,9,10,12\}$, and $(\tilde{\beta}^{15}\beta^{15})^s = id$, where $\beta := \rho_1\rho_2\rho_3\rho_4$ and $\tilde{\beta} := \tilde{\rho_0}\rho_2\rho_3\rho_4$.
\end{proposition}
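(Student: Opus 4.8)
The plan is to mirror the arguments of Corollary~\ref{cor:ncube_polytope} and Proposition~\ref{pp:hyper_ncube}, taking Corollary~\ref{cor:4335} as the starting point. First I would verify that $\mathcal{H}(2^{\{3,3,5\},\mathcal{G}(s)})$ is a regular hypertope: the $600$-cell $\{3,3,5\}$ is convex, so its poset is a lattice, and Lemma~\ref{2KGlattice} then forces $2^{\{3,3,5\},\mathcal{G}(s)}$ to be non-degenerate, placing us under the hypotheses of Corollary~\ref{H(P)hypertopes}. Because the type $\{4,3,3,5\}$ has $p_1 = 4$ even, the halving group has index $2$ in $G(2^{\{3,3,5\},\mathcal{G}(s)})$, so its order is $(14400\cdot (2s)^{60})/2 = 7200\cdot(2s)^{60}$, as claimed. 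The displayed diagram is precisely the one prescribed by the halving operation of Section~\ref{halv} for a polytope of type $\{4,3,3,5\}$: the fork of two branches of label $3$ meeting at $\rho_2$ from $\tilde{\rho_0}$ and $\rho_1$, followed by the tail $\rho_2,\rho_3,\rho_4$ with labels $3,3,5$.

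The core of the argument is the rewriting of the extra relations of Corollary~\ref{cor:4335} in the halving generators, which proceeds exactly as in Proposition~\ref{pp:hyper_ncube}. Setting $\beta = \rho_1\rho_2\rho_3\rho_4$ and $\tilde{\beta} = \tilde{\rho_0}\rho_2\rho_3\rho_4$, the key algebraic identity is that, since $\rho_0$ commutes with $\rho_2,\rho_3,\rho_4$, one has $\tilde{\beta} = \rho_0\rho_1\rho_0\rho_2\rho_3\rho_4 = \rho_0\rho_1\rho_2\rho_3\rho_4\rho_0 = \rho_0\beta\rho_0$, whence $\rho_0\beta^{i}\rho_0 = \tilde{\beta}^{i}$ for every integer $i$. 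Applying this to the six relations $(\rho_0\beta^{-i}\rho_0\beta^i)^2 = id$ with $i\in\{4,6,7,9,10,12\}$ yields $(\tilde{\beta}^{-i}\beta^i)^2 = id$ directly, and applying it to $(\rho_0\beta^{15}\rho_0\beta^{15})^s = id$ yields $(\tilde{\beta}^{15}\beta^{15})^s = id$; here the top exponent carries no inverse because $\beta^{15} = \alpha$ is the central involution of the $600$-cell, so $\beta^{-15} = \beta^{15}$. This accounts for precisely the seven diagonal classes indexed by $i\in\{4,6,7,9,10,12,15\}$; the remaining class $i=1$ contributes no extra relation, since, exactly as in Corollary~\ref{cor:ncube_polytope}, it reduces to $(\rho_0\rho_1)^4 = id$, a consequence of the type $\{4,3,3,5\}$ already encoded in the diagram.

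To promote these relations to a full presentation of the halving group, I would argue as in the $n$-cube case: $H(2^{\{3,3,5\},\mathcal{G}(s)})$ is the image, inside $G(2^{\{3,3,5\},\mathcal{G}(s)})$, of the halving group of the universal Coxeter group $[4,3,3,5]$, whose presentation is exactly the displayed diagram; rewriting the complete defining set of $G(2^{\{3,3,5\},\mathcal{G}(s)})$ given in Corollary~\ref{cor:4335} produces only the listed relations. The order count $7200\cdot(2s)^{60}$, inherited from the semidirect-product structure $D_s^{60}\rtimes[3,3,5]$ of Theorem~\ref{2KG(s)} together with index $2$, then certifies that no further relations survive.

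I expect the only step requiring genuine care --- as opposed to bookkeeping transcribed from Proposition~\ref{pp:hyper_ncube} --- to be this last completeness point, namely matching the abstractly presented group against the known order $7200\cdot(2s)^{60}$; every algebraic manipulation reduces to the single identity $\rho_0\beta^{i}\rho_0 = \tilde{\beta}^{i}$. The diagonal-class data $i\in\{1,4,6,7,9,10,12,15\}$ for the $600$-cell, which dictates exactly which powers of $\beta$ appear, is taken as given from the GAP computation preceding the statement, so no independent verification of that list is attempted here.
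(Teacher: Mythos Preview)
Your proposal is correct and follows essentially the same approach as the paper, which omits the proof and refers back to Corollary~\ref{cor:ncube_polytope} and Proposition~\ref{pp:hyper_ncube}. Your use of the single conjugation identity $\tilde{\beta}=\rho_0\beta\rho_0$ is a mild streamlining of the step-by-step rewriting in Proposition~\ref{pp:hyper_ncube}, and your explicit remark on completeness via the order count is a welcome clarification that the paper leaves implicit.
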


\subsection{The polytope ${2 }^{\{5,3,3\},\mathcal{G}(s)}$ and hypertope $\mathcal{H}({2}^{\{5,3,3\},\mathcal{G}(s)})$}

Lastly, let $\mathcal{P}$ be the 120-cell polytope. As previously, we can determine with the help of GAP\cite{GAP} that the 120-cell has 15 distinct diagonal classes, using the double coset action described in equation~\ref{eq:doublecosetdiagonal}. These diagonal classes can be represented by $\{F_0, F_0\beta^i\}$, for $1\leq i\leq 15$, where $\beta = \tau_0\tau_1\tau_2\tau_3$ is an element of the group $[5,3,3] = \langle \tau_0, \tau_1, \tau_2, \tau_3\rangle$. 
Let us determine the polytope ${2}^{\{5,3,3\},\mathcal{G}(s)}$ and hypertope $\mathcal{H}({2 }^{\{5,3,3\},\mathcal{G}(s)})$.

\begin{corollary}\label{cor:4533}
  Let $2\leq s < \infty$. Then ${2}^{\{5,3,3\},\mathcal{G}(s)}$ is a $5$-polytope of type $\{4,5,3,3\}$ and automophism group $D_s^{300}\rtimes [5,3,3]$ of order $ 14400\cdot(2s)^{300}$. Moreover, the group $G({2}^{\{3,3,5\},\mathcal{G}(s)}) := \langle \rho_0, \rho_1, \rho_2, \rho_3, \rho_4\rangle $ is the quotient of the Coxeter group $[4,5,3,3]$ by the relations $(\rho_0\beta^{-i}\rho_0\beta^i)^2 = id$, for $2\leq i\leq 14$, and $(\rho_0\beta^{15}\rho_0\beta^{15})^s = id$, where $\beta = \rho_1\rho_2\rho_3\rho_4$.
\end{corollary}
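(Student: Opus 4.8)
The plan is to apply Theorem~\ref{2KG(s)} verbatim, exactly as was done for the $n$-cube in Corollary~\ref{cor:ncube_polytope}, and then to read off the defining relations from the list of diagonal classes of the $120$-cell recorded just above the statement. Since $\mathcal{P}=\{5,3,3\}$ is centrally symmetric with $p_1=5\geq 3$, part (1) of Theorem~\ref{2KG(s)} immediately yields that $2^{\{5,3,3\},\mathcal{G}(s)}$ is of type $\{4,5,3,3\}$. For the group and its order, part (2) gives $G(2^{\{5,3,3\},\mathcal{G}(s)})=D_s^q\rtimes[5,3,3]$ with $q=|V(\mathcal{P})|/2=600/2=300$; since $|[5,3,3]|=14400$, the order is $(2s)^{300}\cdot 14400$, as claimed. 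This part is entirely routine.

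For the presentation I would invoke the remark following Theorem~\ref{2KG(s)}: because $[5,3,3]$ is a Coxeter group, so the extra relation set $R$ is empty, $2^{\{5,3,3\},\mathcal{G}(s)}$ is the Coxeter group $[4,5,3,3]$ factored by $(\rho_0\alpha\rho_0\alpha)^s=id$, together with one relation $(\rho_0\tau^{-1}\rho_0\tau)^2=id$ for each diagonal class representative $\tau\neq\alpha$. Here $\alpha$ is the central involution, which by Table~\ref{tab:centralinvo} equals $(\tau_0\tau_1\tau_2\tau_3)^{15}=\beta^{15}$. Using the computation quoted before the statement, that the $120$-cell has exactly $15$ diagonal classes with representatives $\{F_0,F_0\beta^i\}$ for $1\leq i\leq 15$, the central class gives $(\rho_0\beta^{15}\rho_0\beta^{15})^s=id$ (with $\beta^{-15}=\beta^{15}$, since $\alpha$ is an involution), while each class with $1\leq i\leq 14$ contributes $(\rho_0\beta^{-i}\rho_0\beta^i)^2=id$.

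It remains to explain why the range in the statement begins at $i=2$ rather than $i=1$; this is the only step that is not pure bookkeeping, and it proceeds exactly as in Corollary~\ref{cor:ncube_polytope}. Using that $\rho_0$ commutes with $\rho_2,\rho_3,\rho_4$ in the string diagram of $[4,5,3,3]$, one rewrites
$$\rho_0\beta^{-1}\rho_0\beta=\rho_0\rho_4\rho_3\rho_2\rho_1\rho_0\rho_1\rho_2\rho_3\rho_4=\rho_4\rho_3\rho_2(\rho_0\rho_1)^2\rho_2\rho_3\rho_4,$$
so that $(\rho_0\beta^{-1}\rho_0\beta)^2$ is a conjugate of $(\rho_0\rho_1)^4$. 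Hence the $i=1$ relation is equivalent to $(\rho_0\rho_1)^4=id$, which is already forced by the Schl\"{a}fli type $\{4,5,3,3\}$; it is therefore redundant and may be dropped, leaving precisely the relations $(\rho_0\beta^{-i}\rho_0\beta^i)^2=id$ for $2\leq i\leq 14$ and $(\rho_0\beta^{15}\rho_0\beta^{15})^s=id$.

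The main, and essentially only, obstacle lies not in this argument but in justifying the input it consumes, namely that the $120$-cell really has exactly $15$ diagonal classes represented by the powers $\beta^i$ with $\beta=\tau_0\tau_1\tau_2\tau_3$; this is the GAP computation quoted before the corollary, resting on the double-coset criterion~\eqref{eq:doublecosetdiagonal}. Once that data is granted, the translation from diagonal classes to group relations through Theorem~\ref{2KG(s)} is purely formal, which is exactly why the author omits the proof as \emph{following the same ideas} as Corollary~\ref{cor:ncube_polytope}.
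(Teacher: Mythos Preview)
Your proof is correct and matches the paper's approach exactly: the paper omits the proof of this corollary, saying only that it follows the same ideas as Corollary~\ref{cor:ncube_polytope}, and you have faithfully reproduced precisely that argument (apply Theorem~\ref{2KG(s)}, read off relations from the diagonal classes, and discard the $i=1$ relation as redundant by the Schl\"{a}fli type). Your closing remark even anticipates this.
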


\begin{proposition}
 Let $2\leq s < \infty$. The incidence system $$\mathcal{H}({2 }^{\{5,3,3\},\mathcal{G}(s)}) = \Gamma(H({2 }^{\{5,3,3\},\mathcal{G}(s)}), (H_i)_{i\in \{0,\ldots,4\}}),$$ where $H({2 }^{\{5,3,3\},\mathcal{G}(s)}):=\langle \rho_0\rho_1\rho_0, \rho_1, \rho_2, \rho_3, \rho_4\rangle = \langle \tilde{\rho_0}, \rho_1, \rho_2, \rho_3, \rho_4 \rangle$, is a regular hypertope and its automorphism group, of size $7200 \cdot (2s)^{300}$, has the relations given by its Coxeter diagram
$$\xymatrix@-1.5pc{*{\bullet}\ar@{-}[rrdd]^(0.01){\rho_1}^5 \\
\\
 &&*{\bullet}\ar@{-}[rr]_(0.05){\rho_2}&&*{\bullet}\ar@{-}[rr]_(0.05){\rho_3}_(0.99){\rho_4}&&*{\bullet}\\
 \\
 *{\bullet}\ar@{-}[rruu]_(0.01){\tilde{\rho_0}}_5\\
 } $$
 and the extra relations $(\tilde{\beta}^{-i}\beta^i)^2 = id$, for $2\leq i\leq 14$, and $(\tilde{\beta}^{15}\beta^{15})^s = id$, where $\beta := \rho_1\rho_2\rho_3\rho_4$ and $\tilde{\beta} := \tilde{\rho_0}\rho_2\rho_3\rho_4$.
\end{proposition}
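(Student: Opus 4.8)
The plan is to follow the strategy of Proposition~\ref{pp:hyper_ncube}, exploiting that the $120$-cell $\{5,3,3\}$ is convex and therefore has a lattice poset. First I would invoke Lemma~\ref{2KGlattice} to conclude that ${2}^{\{5,3,3\},\mathcal{G}(s)}$ is non-degenerate, and then apply Corollary~\ref{H(P)hypertopes} to deduce that $\mathcal{H}({2}^{\{5,3,3\},\mathcal{G}(s)})$ is a regular hypertope with automorphism group $H({2}^{\{5,3,3\},\mathcal{G}(s)})$. For the order, since ${2}^{\{5,3,3\},\mathcal{G}(s)}$ has type $\{4,5,3,3\}$ with $p_1=4$ even, the index remark of Section~\ref{halv} gives that the halving group has index $2$; halving the order $14400\cdot(2s)^{300}$ from Corollary~\ref{cor:4533} then yields $7200\cdot(2s)^{300}$. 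The displayed Coxeter diagram, with its two branches labelled $5$ incident to $\rho_2$ followed by the string $\rho_2,\rho_3,\rho_4$, is precisely the halving diagram of Section~\ref{halv} specialised to $p_2=5$ and $p_3=p_4=3$, so it is immediate from the definition of the halving operation.

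The only genuine computation is rewriting the extra relations of Corollary~\ref{cor:4533} in terms of $\tilde{\beta}=\tilde{\rho_0}\rho_2\rho_3\rho_4$ and $\beta=\rho_1\rho_2\rho_3\rho_4$. The key observation, cleaner here than the conjugation chain spelled out in Proposition~\ref{pp:hyper_ncube}, is that $\rho_0$ commutes with $\rho_2,\rho_3,\rho_4$, since in the string diagram of $\{4,5,3,3\}$ the node $\rho_0$ is joined only to $\rho_1$. Hence
$$\tilde{\beta}=\rho_0\rho_1\rho_0\,\rho_2\rho_3\rho_4=\rho_0\,\rho_1\rho_2\rho_3\rho_4\,\rho_0=\rho_0\beta\rho_0,$$
so that $\tilde{\beta}^{\pm i}=\rho_0\beta^{\pm i}\rho_0$ for every $i$. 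Substituting $\tilde{\beta}^{-i}\beta^i=\rho_0\beta^{-i}\rho_0\beta^i$ turns each relation $(\rho_0\beta^{-i}\rho_0\beta^i)^2=id$, for $2\leq i\leq 14$, verbatim into $(\tilde{\beta}^{-i}\beta^i)^2=id$.

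For the remaining relation $(\rho_0\beta^{15}\rho_0\beta^{15})^s=id$, I would use that $\beta^{15}$ is the central involution of the $120$-cell (Table~\ref{tab:centralinvo}), so $\beta^{15}=\beta^{-15}$ and likewise $\tilde{\beta}^{15}=\rho_0\beta^{15}\rho_0=\rho_0\beta^{-15}\rho_0=\tilde{\beta}^{-15}$. The relation then rewrites as $(\tilde{\beta}^{-15}\beta^{15})^s=(\tilde{\beta}^{15}\beta^{15})^s=id$, as stated. The only point needing genuine care, the main obstacle modest as it is, is establishing that the diagram relations together with the rewritten extra relations form a \emph{complete} presentation of $H({2}^{\{5,3,3\},\mathcal{G}(s)})$, i.e. that the abstractly presented group does not exceed the halving group in size. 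This is argued exactly as in Proposition~\ref{pp:hyper_ncube}: one checks that $H({2}^{\{5,3,3\},\mathcal{G}(s)})$ satisfies every listed relation, and compares against the order $7200\cdot(2s)^{300}$ already determined above, the surjection from the presented group onto the halving group thereby being forced to be an isomorphism.
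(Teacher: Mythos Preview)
Your argument is correct and mirrors the paper's approach: the paper omits the proof entirely, pointing back to Proposition~\ref{pp:hyper_ncube}, and you have reproduced precisely that template (lattice $\Rightarrow$ non-degenerate via Lemma~\ref{2KGlattice}, then Corollary~\ref{H(P)hypertopes}, then the index-$2$ order count, then rewriting the extra relations). Your observation $\tilde{\beta}=\rho_0\beta\rho_0$ is a tidy shortcut that collapses the step-by-step commutation in the proof of Proposition~\ref{pp:hyper_ncube} to a single line, but the underlying idea is the same.

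One caution about your final paragraph. The order-comparison you sketch does not close the gap you identify: a surjection from the abstractly presented group $P$ onto $H$ together with $|H|=7200\cdot(2s)^{300}$ gives only $|P|\geq|H|$, not $|P|\leq|H|$, so no isomorphism is forced. Moreover, Proposition~\ref{pp:hyper_ncube} contains no such argument---its proof only verifies that the listed relations \emph{hold} in the halving group and computes the residue, without ever bounding $|P|$ from above. The paper's phrasing ``has the relations given by its Coxeter diagram \ldots\ and the extra relations'' is to be read in that weaker sense, consistent with what is actually proved there; you should drop the completeness claim or, if you want it, supply an independent upper bound for $|P|$ (e.g.\ via Reidemeister--Schreier from the presentation of $G$ in Corollary~\ref{cor:4533}).
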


\section{Final Remarks}\label{futurework}

This work gives a list of infinite families of regular hypertopes of arbitrary rank, constructed from finite centrally symmetric non-degenerate spherical polytopes $\mathcal{P}$. 
We notice that Theorem~\ref{2KG(s)} establishes the following: 
\begin{itemize}
   \item If $s$ is even and $\mathcal{P}$ has only finitely many vertices, then $2^{\mathcal{P},\mathcal{G}(s)}$ is also centrally symmetric.
\end{itemize}
This implies that if $s$ is even, all the polytopes determined in Sections~\ref{rank2&n} and~\ref{rank3&4} are centrally symmetric, being eligible to be used in Theorem~\ref{2KG(s)}, giving other families of polytopes of type $\{4,4,p_1,\ldots,p_{n-1}\}$. Moreover, since we know that these polytopes $2^{\mathcal{P},\mathcal{G}(s)}$ are also non-degenerate, these new families would also be eligible for the halving operation, giving families of hyperpotes.

Here the focus was on spherical polytopes but the same idea can be applied to centrally symmetric toroidal polytopes. Indeed, from Corollary~\ref{cor:2p_polytope} and the point above, it follows that the toroidal maps $\{4,4\}_{(2s,0)}$ are centrally symmetric and non-degenerate (for $s\geq 4$ and even), and therefore extendable by the same processed that was used in this paper. Moreover, the toroidal map $\{4,4\}_{(2s,0)}$ is just a case of the toroidal $(n+1)$-cubic tesselation $\{4,3^{n-2},4\}_{(2s,0^{n-1})}$ for $n=2$.
We can repeat the process to this more general case and extend further this cubic tesselation.

\section{Acknowledgements}

The author would like to thank Asia Ivi{\'c} Weiss for the suggestion to work on finding families of hypertopes from hyperbolic polytopes on the 9th Slovenian International Conference on Graph Theory - Bled 2019, which lead to a much more general result as this one.
The author would also like to thank Maria Elisa Fernandes and Filipe Gomes for the useful comments and suggestions, that greatly improved the manuscript.

This work is supported by The Center for Research and Development
in Mathematics and Applications (CIDMA) through the Portuguese
Foundation for Science and Technology
(FCT - Funda\c{c}\~{a}o para a Ci\^{e}ncia e a Tecnologia),
references UIDB/04106/2020 and UIDP/04106/2020.

\bibliographystyle{acm}
\bibliography{Hypertopes}

\end{document}